\tikzset{>=latex}
\newcommand{\D}[1]{\ensuremath{\mathrm{\textbf{D}}}}
\renewcommand{\P}{\ensuremath{\mathcal{P}}}
\newcommand{\F}{\ensuremath{\mathcal{F}}}
\newcommand{\x}{\ensuremath{\mathbf{x}}}
\definecolor{keywords}{RGB}{255,0,90}
\definecolor{comments}{RGB}{0,0,113}
\definecolor{myred}{RGB}{160,0,0}
\definecolor{green}{RGB}{0,150,0}
\newtheorem{proposition}{Proposition}
\newtheorem{theorem}{Theorem}
\newtheorem{lemma}{Lemma}
\theoremstyle{definition}
\newtheorem{definition}{Definition}
\author{Jakub Slia\v{c}an\affiliationmark{1}
  \and Walter Stromquist\affiliationmark{2}}
\title{Improving bounds on packing densities of 4-point permutations}
\affiliation{
  Department of Mathematics and Statistics, The Open University, UK\\
  Department of Mathematics, Bryn Mawr College, USA}
\keywords{permutation density, permutation packing, permuton, subpermutation}
\begin{document}

\publicationdetails{19}{2018}{2}{3}{3286}

\maketitle

\newcommand{\dicycle}{
  \begin{tikzpicture}[baseline=-0.3ex,scale=0.2]
    \tikzstyle{vertex}=[circle,fill=black, minimum size=1pt,inner sep=1pt]
    \node[vertex] (v1) at (0,0){};
    \node[vertex] (v2) at (2,0){};
    \node[vertex] (v3) at (1,1.4){};
    \draw[->](v1)--(v2);
    \draw[->](v2)--(v3);
    \draw[->](v3)--(v1);
  \end{tikzpicture}
}

\newcommand{\twochain}{
  \begin{tikzpicture}[baseline=-0.3ex,scale=0.2]
    \tikzstyle{vertex}=[circle,fill=black, minimum size=1pt,inner sep=1pt]
    \node[vertex] (v1) at (0,0){};
    \node[vertex] (v2) at (2,0){};
    \node[vertex] (v3) at (1,1.4){};
    \draw[->](v1)--(v2);
    \draw[->](v2)--(v3);
  \end{tikzpicture}
}

\newcommand{\orcocherry}{
  \begin{tikzpicture}[baseline=-0.3ex,scale=0.2]
    \tikzstyle{vertex}=[circle,fill=black, minimum size=1pt,inner sep=1pt]
    \node[vertex] (v1) at (0,0){};
    \node[vertex] (v2) at (2,0){};
    \node[vertex] (v3) at (1,1.4){};
    \draw[->](v1)--(v2);
  \end{tikzpicture}
}

\newcommand{\outstar}{
  \begin{tikzpicture}[baseline=-0.3ex,scale=0.2]
    \tikzstyle{vertex}=[circle,fill=black, minimum size=1pt,inner sep=1pt]
    \node[vertex] (v1) at (0,0){};
    \node[vertex] (v2) at (2,0){};
    \node[vertex] (v3) at (1,1.4){};
    \draw[->](v3)--(v1);
    \draw[->](v3)--(v2);
  \end{tikzpicture}
}

\newcommand{\digraphacbd}{
  \begin{tikzpicture}[baseline=-0.3ex,scale=0.2]
    \tikzstyle{vertex}=[circle,fill=black, minimum size=1pt,inner sep=1pt]
    \node[vertex] (v1) at (0,0){};
    \node[vertex] (v2) at (2,0){};
    \node[vertex] (v3) at (1,1.4){};
    \node[vertex] (v4) at (3, 1.4){};
    \draw[->](v1)--(v2);
    \draw[->](v1)--(v3);
    \draw[->](v1)--(v4);
    \draw[->](v2)--(v4);
    \draw[->](v3)--(v4);
  \end{tikzpicture}
}

\newcommand{\gridbadc}{
  \begin{tikzpicture}[baseline=0.5ex,scale=0.15]
  \tikzstyle{vertex}=[circle,draw=black, fill=black, minimum size=2pt,inner sep=1pt]

  \draw[gray, very thin] (0,0) grid (9,9);
  \fill[gray] (1,0) rectangle (2,9);
  \fill[gray] (3,0) rectangle (4,9);
  \fill[gray] (5,0) rectangle (6,9);
  \fill[gray] (7,0) rectangle (8,9);

  \fill[gray] (0,1) rectangle (9,2);
  \fill[gray] (0,3) rectangle (9,4);
  \fill[gray] (0,5) rectangle (9,6);
  \fill[gray] (0,7) rectangle (9,8);
  
  \node[vertex] (v1) at (1.5, 3.5){};
  \node[vertex] (v2) at (3.5,1.5){};
  \node[vertex] (v3) at (5.5,7.5){};
  \node[vertex] (v4) at (7.5,5.5){};
  \draw (v1) (v2) (v3) (v4);
  \end{tikzpicture}}

\newcommand{\gridabdc}{
  \begin{tikzpicture}[baseline=0.5ex,scale=0.15]
  \tikzstyle{vertex}=[circle,draw=black, fill=black, minimum size=2pt,inner sep=1pt]

  \draw[gray, very thin] (0,0) grid (9,9);
  \fill[gray] (1,0) rectangle (2,9);
  \fill[gray] (3,0) rectangle (4,9);
  \fill[gray] (5,0) rectangle (6,9);
  \fill[gray] (7,0) rectangle (8,9);

  \fill[gray] (0,1) rectangle (9,2);
  \fill[gray] (0,3) rectangle (9,4);
  \fill[gray] (0,5) rectangle (9,6);
  \fill[gray] (0,7) rectangle (9,8);
  
  \node[vertex] (v1) at (1.5, 1.5){};
  \node[vertex] (v2) at (3.5,3.5){};
  \node[vertex] (v3) at (5.5,7.5){};
  \node[vertex] (v4) at (7.5,5.5){};
  \draw (v1) (v2) (v3) (v4);
  \end{tikzpicture}}

\newcommand{\gridbacd}{
  \begin{tikzpicture}[baseline=0.5ex,scale=0.15]
  \tikzstyle{vertex}=[circle,draw=black, fill=black, minimum size=2pt,inner sep=1pt]

  \draw[gray, very thin] (0,0) grid (9,9);
  \fill[gray] (1,0) rectangle (2,9);
  \fill[gray] (3,0) rectangle (4,9);
  \fill[gray] (5,0) rectangle (6,9);
  \fill[gray] (7,0) rectangle (8,9);

  \fill[gray] (0,1) rectangle (9,2);
  \fill[gray] (0,3) rectangle (9,4);
  \fill[gray] (0,5) rectangle (9,6);
  \fill[gray] (0,7) rectangle (9,8);
  
  \node[vertex] (v1) at (1.5, 3.5){};
  \node[vertex] (v2) at (3.5,1.5){};
  \node[vertex] (v3) at (5.5,5.5){};
  \node[vertex] (v4) at (7.5,7.5){};
  \draw (v1) (v2) (v3) (v4);
  \end{tikzpicture}}


\newcommand{\atau}{
  \begin{tikzpicture}[baseline=0.5ex,scale=0.15]
  \tikzstyle{vertex}=[circle,draw=black,fill=white, minimum size=2pt,inner sep=1pt]
  \node[vertex] (v1) at (0.5, 0.5){};
  \draw (v1);
  \end{tikzpicture}}

\renewcommand{\a}{
  \begin{tikzpicture}[baseline=0.5ex,scale=0.15]
  \tikzstyle{vertex}=[circle,fill=black, minimum size=2pt,inner sep=1pt]
  \node[vertex] (v1) at (0.5, 0.5){};
  \draw (v1);
  \end{tikzpicture}}


\newcommand{\abtau}{
  \begin{tikzpicture}[baseline=0.5ex,scale=0.15]
  \tikzstyle{vertex}=[circle,draw=black, fill=black, minimum size=2pt,inner sep=1pt]
  \node[vertex] (v1) at (0.5, 0.5){};
  \tikzstyle{vertex}=[circle,draw=black, fill=white, minimum size=2pt,inner sep=1pt]
  \node[vertex] (v2) at (1.5,1.5){};
  \draw (v1) (v2);
  \end{tikzpicture}}

\newcommand{\tauab}{
  \begin{tikzpicture}[baseline=0.5ex,scale=0.15]
  \tikzstyle{vertex}=[circle,draw=black, fill=white, minimum size=2pt,inner sep=1pt]
  \node[vertex] (v1) at (0.5, 0.5){};
  \tikzstyle{vertex}=[circle,draw=black, fill=black, minimum size=2pt,inner sep=1pt]
  \node[vertex] (v2) at (1.5,1.5){};
  \draw (v1) (v2);
  \end{tikzpicture}}

\newcommand{\tauba}{
  \begin{tikzpicture}[baseline=0.5ex,scale=0.15]
  \tikzstyle{vertex}=[circle, draw=black, fill=white, minimum size=2pt,inner sep=1pt]
  \node[vertex] (v1) at (0.5, 1.5){};
  \tikzstyle{vertex}=[circle, draw=black, fill=black, minimum size=2pt,inner sep=1pt]
  \node[vertex] (v2) at (1.5, 0.5){};
  \draw (v1) (v2);
  \end{tikzpicture}}

\newcommand{\batau}{
  \begin{tikzpicture}[baseline=0.5ex,scale=0.15]
  \tikzstyle{vertex}=[circle, draw=black, fill=black, minimum size=2pt,inner sep=1pt]
  \node[vertex] (v1) at (0.5, 1.5){};
  \tikzstyle{vertex}=[circle, draw=black, fill=white, minimum size=2pt,inner sep=1pt]
  \node[vertex] (v2) at (1.5, 0.5){};
  \draw (v1) (v2);
  \end{tikzpicture}}


\newcommand{\ab}{
  \begin{tikzpicture}[baseline=0.5ex,scale=0.15]
  \tikzstyle{vertex}=[circle,fill=black, minimum size=2pt,inner sep=1pt]
  \node[vertex] (v1) at (0.5, 0.5){};
  \tikzstyle{vertex}=[circle,fill=black, minimum size=2pt,inner sep=1pt]
  \node[vertex] (v2) at (1.5,1.5){};
  \draw (v1) (v2);
  \end{tikzpicture}}

\newcommand{\ba}{
  \begin{tikzpicture}[baseline=0.5ex,scale=0.15]
  \tikzstyle{vertex}=[circle, fill=black, minimum size=2pt,inner sep=1pt]
  \node[vertex] (v1) at (1.5, 1.5){};
  \tikzstyle{vertex}=[circle, fill=black, minimum size=2pt,inner sep=1pt]
  \node[vertex] (v2) at (0.5, 0.5){};
  \draw (v1) (v2);
  \end{tikzpicture}}


\newcommand{\tauabc}{
  \begin{tikzpicture}[baseline=0.5ex,scale=0.15]
  \tikzstyle{vertex}=[circle,draw=black,fill=white, minimum size=2pt,inner sep=1pt]
  \node[vertex] (v1) at (0.5, 0.5){};
  \tikzstyle{vertex}=[circle,fill=black, minimum size=2pt,inner sep=1pt]
  \node[vertex] (v2) at (1.5,1.5){};
  \node[vertex] (v3) at (2.5, 2.5){};
  \draw (v1) (v2) (v3);
  \end{tikzpicture}}

\newcommand{\ataubc}{
  \begin{tikzpicture}[baseline=0.5ex,scale=0.15]
  \tikzstyle{vertex}=[circle,draw=black,fill=white, minimum size=2pt,inner sep=1pt]
  \node[vertex] (v1) at (1.5, 1.5){};
  \tikzstyle{vertex}=[circle,fill=black, minimum size=2pt,inner sep=1pt]
  \node[vertex] (v2) at (0.5,0.5){};
  \node[vertex] (v3) at (2.5, 2.5){};
  \draw (v1) (v2) (v3);
  \end{tikzpicture}}

\newcommand{\acbtau}{
  \begin{tikzpicture}[baseline=0.5ex,scale=0.15]
  \tikzstyle{vertex}=[circle,draw=black,fill=white, minimum size=2pt,inner sep=1pt]
  \node[vertex] (v1) at (0.5, 0.5){};
  \tikzstyle{vertex}=[circle,fill=black, minimum size=2pt,inner sep=1pt]
  \node[vertex] (v2) at (1.5,2.5){};
  \node[vertex] (v3) at (2.5, 1.5){};
  \draw (v1) (v2) (v3);
  \end{tikzpicture}}

\newcommand{\abc}{
  \begin{tikzpicture}[baseline=0.5ex,scale=0.15]
  \tikzstyle{vertex}=[circle,fill=black, minimum size=2pt,inner sep=1pt]
  \node[vertex] (v1) at (0.5, 0.5){};
  \node[vertex] (v2) at (1.5,1.5){};
  \node[vertex] (v3) at (2.5, 2.5){};
  \draw (v1) (v2) (v3);
  \end{tikzpicture}}

\newcommand{\acb}{
  \begin{tikzpicture}[baseline=0.5ex,scale=0.15]
  \tikzstyle{vertex}=[circle,fill=black, minimum size=2pt,inner sep=1pt]
  \node[vertex] (v1) at (0.5, 0.5){};
  \node[vertex] (v2) at (1.5,2.5){};
  \node[vertex] (v3) at (2.5, 1.5){};
  \draw (v1) (v2) (v3);
  \end{tikzpicture}}

\newcommand{\bac}{
  \begin{tikzpicture}[baseline=0.5ex,scale=0.15]
  \tikzstyle{vertex}=[circle,fill=black, minimum size=2pt,inner sep=1pt]
  \node[vertex] (v2) at (0.5,1.5){};
  \node[vertex] (v1) at (1.5, 0.5){};
  \node[vertex] (v3) at (2.5, 2.5){};
  \draw (v1) (v2) (v3);
  \end{tikzpicture}}

\newcommand{\bca}{
  \begin{tikzpicture}[baseline=0.5ex,scale=0.15]
  \tikzstyle{vertex}=[circle,fill=black, minimum size=2pt,inner sep=1pt]
  \node[vertex] (v1) at (0.5, 1.5){};
  \node[vertex] (v2) at (1.5,2.5){};
  \node[vertex] (v3) at (2.5, 0.5){};
  \draw (v1) (v2) (v3);
  \end{tikzpicture}}

\newcommand{\cba}{
  \begin{tikzpicture}[baseline=0.5ex,scale=0.15]
  \tikzstyle{vertex}=[circle,fill=black, minimum size=2pt,inner sep=1pt]
  \node[vertex] (v3) at (0.5, 2.5){};
  \node[vertex] (v2) at (1.5,1.5){};
  \node[vertex] (v1) at (2.5, 0.5){};
  \draw (v1) (v2) (v3);
  \end{tikzpicture}}

\newcommand{\cab}{
  \begin{tikzpicture}[baseline=0.5ex,scale=0.15]
  \tikzstyle{vertex}=[circle,fill=black, minimum size=2pt,inner sep=1pt]
  \node[vertex] (v1) at (0.5, 2.5){};
  \node[vertex] (v2) at (1.5,0.5){};
  \node[vertex] (v3) at (2.5, 1.5){};
  \draw (v1) (v2) (v3);
  \end{tikzpicture}}


\newcommand{\abcd}{
  \begin{tikzpicture}[baseline=0.6ex,scale=0.1]
  \tikzstyle{vertex}=[circle,fill=black, minimum size=2pt,inner sep=1pt]
  \node[vertex] (v1) at (0.5, 0.5){};
  \node[vertex] (v2) at (1.5, 1.5){};
  \node[vertex] (v3) at (2.5, 2.5){};
  \node[vertex] (v4) at (3.5, 3.5){};
  \draw (v1) (v2) (v3) (v4);
  \end{tikzpicture}}

\newcommand{\bdca}{
  \begin{tikzpicture}[baseline=0.6ex,scale=0.1]
  \tikzstyle{vertex}=[circle,fill=black, minimum size=2pt,inner sep=1pt]
  \node[vertex] (v1) at (0.5, 1.5){};
  \node[vertex] (v2) at (1.5, 3.5){};
  \node[vertex] (v3) at (2.5, 2.5){};
  \node[vertex] (v4) at (3.5, 0.5){};
  \draw (v1) (v2) (v3) (v4);
  \end{tikzpicture}}

\newcommand{\acdb}{
  \begin{tikzpicture}[baseline=0.6ex,scale=0.1]
  \tikzstyle{vertex}=[circle,fill=black, minimum size=2pt,inner sep=1pt]
  \node[vertex] (v1) at (0.5, 0.5){};
  \node[vertex] (v2) at (1.5, 2.5){};
  \node[vertex] (v3) at (2.5, 3.5){};
  \node[vertex] (v4) at (3.5, 1.5){};
  \draw (v1) (v2) (v3) (v4);
  \end{tikzpicture}}


\newcommand{\bcaoba}{
  \begin{tikzpicture}[baseline=0.6ex,scale=0.1]
  \tikzstyle{vertex}=[circle,fill=black, minimum size=2pt,inner sep=1pt]
  \node[vertex] (v1) at (0.5, 1.5){};
  \node[vertex] (v2) at (1.5, 2.5){};
  \node[vertex] (v3) at (2.5, 0.5){};
  \node[vertex] (v4) at (3.5, 4.5){};
  \node[vertex] (v5) at (4.5, 3.5){};
  \draw[gray, very thin] (0,0) grid (5,5);
  \draw (v1) (v2) (v3) (v4) (v5);
  \end{tikzpicture}}

\newcommand{\baoaoba}{
  \begin{tikzpicture}[baseline=0.6ex,scale=0.1]
  \tikzstyle{vertex}=[circle,fill=black, minimum size=2pt,inner sep=1pt]
  \node[vertex] (v1) at (0.5, 1.5){};
  \node[vertex] (v2) at (1.5, 0.5){};
  \node[vertex] (v3) at (2.5, 2.5){};
  \node[vertex] (v4) at (3.5, 4.5){};
  \node[vertex] (v5) at (4.5, 3.5){};
  \draw[gray, very thin] (0,0) grid (5,5);
  \draw (v1) (v2) (v3) (v4) (v5);
  \end{tikzpicture}}

\newcommand{\aobamba}{
  \begin{tikzpicture}[baseline=0.6ex,scale=0.1]
  \tikzstyle{vertex}=[circle,fill=black, minimum size=2pt,inner sep=1pt]
  \node[vertex] (v1) at (0.5, 0.5){};
  \node[vertex] (v2) at (1.5, 3.5){};
  \node[vertex] (v3) at (2.5, 4.5){};
  \node[vertex] (v4) at (3.5, 1.5){};
  \node[vertex] (v5) at (4.5, 2.5){};
  \draw[gray, very thin] (0,0) grid (5,5);
  \draw (v1) (v2) (v3) (v4) (v5);
  \end{tikzpicture}}


\newcommand{\bcaocba}{
  \begin{tikzpicture}[baseline=0.6ex,scale=0.1]
  \tikzstyle{vertex}=[circle,fill=black, minimum size=2pt,inner sep=1pt]
  \node[vertex] (v1) at (0.5, 1.5){};
  \node[vertex] (v2) at (1.5, 2.5){};
  \node[vertex] (v3) at (2.5, 0.5){};
  \node[vertex] (v4) at (3.5, 5.5){};
  \node[vertex] (v5) at (4.5, 4.5){};
  \node[vertex] (v6) at (5.5, 3.5){};
  \draw[gray, very thin] (0,0) grid (6,6);
  \draw (v1) (v2) (v3) (v4) (v5) (v6);
  \end{tikzpicture}}

\newcommand{\bcaobca}{
  \begin{tikzpicture}[baseline=0.6ex,scale=0.1]
  \tikzstyle{vertex}=[circle,fill=black, minimum size=2pt,inner sep=1pt]
  \node[vertex] (v1) at (0.5, 1.5){};
  \node[vertex] (v2) at (1.5, 2.5){};
  \node[vertex] (v3) at (2.5, 0.5){};
  \node[vertex] (v4) at (3.5, 4.5){};
  \node[vertex] (v5) at (4.5, 5.5){};
  \node[vertex] (v6) at (5.5, 3.5){};
  \draw[gray, very thin] (0,0) grid (6,6);
  \draw (v1) (v2) (v3) (v4) (v5) (v6);
  \end{tikzpicture}}

\newcommand{\bcaocab}{
  \begin{tikzpicture}[baseline=0.6ex,scale=0.1]
  \tikzstyle{vertex}=[circle,fill=black, minimum size=2pt,inner sep=1pt]
  \node[vertex] (v1) at (0.5, 1.5){};
  \node[vertex] (v2) at (1.5, 2.5){};
  \node[vertex] (v3) at (2.5, 0.5){};
  \node[vertex] (v4) at (3.5, 5.5){};
  \node[vertex] (v5) at (4.5, 3.5){};
  \node[vertex] (v6) at (5.5, 4.5){};
  \draw[gray, very thin] (0,0) grid (6,6);
  \draw (v1) (v2) (v3) (v4) (v5) (v6);
  \end{tikzpicture}}

\newcommand{\baoabmab}{
  \begin{tikzpicture}[baseline=0.6ex,scale=0.1]
  \tikzstyle{vertex}=[circle,fill=black, minimum size=2pt,inner sep=1pt]
  \node[vertex] (v1) at (0.5, 1.5){};
  \node[vertex] (v2) at (1.5, 0.5){};
  \node[vertex] (v3) at (2.5, 4.5){};
  \node[vertex] (v4) at (3.5, 5.5){};
  \node[vertex] (v5) at (4.5, 2.5){};
  \node[vertex] (v6) at (5.5, 3.5){};
  \draw[gray, very thin] (0,0) grid (6,6);
  \draw (v1) (v2) (v3) (v4) (v5) (v6);
  \end{tikzpicture}}

\newcommand{\acbmax}{
  \begin{tikzpicture}[scale=0.5]
    \draw (0,1.7)--(1.3,3);
    \draw (1.3,1)--(2,1.7);
    \draw (2,0.7)--(2.3,1);
    \draw (2.3,0.5)--(2.5,0.7);
    \draw (2.5,0.4)--(2.6,0.5);
    \draw (2.65,0.3)--(2.7,0.35);
  \end{tikzpicture}}

\newcommand{\acdbmax}{
  \begin{tikzpicture}[scale=0.3]

    \draw (3,6)--(5,8);
    \draw (5,4.7)--(6.3,6);
    \draw (6.3,4)--(7,4.7);
    \draw (7,3.7)--(7.3,4);
    \draw (7.3,3.5)--(7.5,3.7);
    \draw (7.5,3.4)--(7.6,3.5);
    \draw (7.65,3.3)--(7.7,3.35);

    \draw (0,1.7)--(1.3,3);
    \draw (1.3,1)--(2,1.7);
    \draw (2,0.7)--(2.3,1);
    \draw (2.3,0.5)--(2.5,0.7);
    \draw (2.5,0.4)--(2.6,0.5);
    \draw (2.65,0.3)--(2.7,0.35);

    \draw (-1.7,-0.7)--(-1,0);
    \draw (-1,-1)--(-0.7,-0.7);
    \draw (-0.7,-1.2)--(-0.5,-1);
    \draw (-0.5,-1.3)--(-0.4,-1.2);
    \draw (-0.35,-1.4)--(-0.3,-1.35);

    \draw (-2.7,-2.3)--(-2.4,-2);
    \draw (-2.4,-2.5)--(-2.2,-2.3);
    \draw (-2.2,-2.6)--(-2.1,-2.5);
    \draw (-2.05,-2.7)--(-2,-2.65);

    \draw[thick] (-3.05,-3.05)--(-3,-3);
    \draw[thick] (-3.25,-3.25)--(-3.2,-3.2);
    \draw[thick] (-3.45,-3.45)--(-3.4,-3.4);

    \draw (14,2)--(16,2);
    \draw (14,1.5)--(16,1.5);

    \draw (25,6)--(27,8)--(29.7,3.35)--(25,6);
    \draw (20,-2.35)--(25,-2.35)--(25,3.35)--(20,3.35)--(20,-2.35);

  \end{tikzpicture}}

\newcommand{\Amax}{
  \begin{tikzpicture}[baseline=1ex, scale=0.15]
    \draw (0,1.7)--(1.3,3);
    \draw (1.3,1)--(2,1.7);
    \draw (2,0.7)--(2.3,1);
    \draw (2.3,0.5)--(2.5,0.7);
    \draw (2.5,0.4)--(2.6,0.5);
    \draw (2.65,0.3)--(2.7,0.35);

    \draw (3,5)--(5,3);
  \end{tikzpicture}}

\newcommand{\AAmax}{
  \begin{tikzpicture}[baseline=1ex, scale=0.15]
    \draw (0,1.7)--(1.3,3);
    \draw (1.3,1)--(2,1.7);
    \draw (2,0.7)--(2.3,1);
    \draw (2.3,0.5)--(2.5,0.7);
    \draw (2.5,0.4)--(2.6,0.5);
    \draw (2.65,0.3)--(2.7,0.35);

    \draw (2.5,5)--(5,2.5);
  \end{tikzpicture}}

\newcommand{\Bmax}{
  \begin{tikzpicture}[baseline=1ex, scale=0.15]
    \draw (3,3)--(5,5);
    \draw (5,1)--(7,3);

    \draw (1,0)--(2,1);
    \draw (2,-1)--(3,0);
    
    \draw (0,-1.5)--(0.5,-1);
    \draw (0.5,-2)--(1,-1.5);

    \draw (-0.4,-2.2)--(-0.2,-2);
    \draw (-0.2,-2.4)--(0,-2.2);

  \end{tikzpicture}}

\newcommand{\Cmax}{
  \begin{tikzpicture}[baseline=1ex, scale=0.15]

    \draw (3,5)--(5,3);
    \draw (2,3)--(3,2);
    \draw (1,2)--(2,1);
    \draw (-1,1)--(1,-1);
  \end{tikzpicture}}

\newcommand{\Dmax}{
  \begin{tikzpicture}[baseline=1ex, scale=0.15]
    \draw (0,1.7)--(1.3,3);
    \draw (1.3,1)--(2,1.7);
    \draw (2,0.7)--(2.3,1);
    \draw (2.3,0.5)--(2.5,0.7);
    \draw (2.5,0.4)--(2.6,0.5);
    \draw (2.65,0.3)--(2.7,0.35);

    \draw (3,4.7)--(4.3,6);
    \draw (4.3,4)--(5,4.7);
    \draw (5,3.7)--(5.3,4);
    \draw (5.3,3.5)--(5.5,3.7);
    \draw (5.5,3.4)--(5.6,3.5);
    \draw (5.65,3.3)--(5.7,3.35);
  \end{tikzpicture}}

\newcommand{\Dmaxr}{
  \begin{tikzpicture}[baseline=1ex, scale=0.15]
    \draw (0,1.7)--(1.3,3);
    \draw (1.3,1)--(2,1.7);
    \draw (2,0.7)--(2.3,1);
    \draw (2.3,0.5)--(2.5,0.7);
    \draw (2.5,0.4)--(2.6,0.5);
    \draw (2.65,0.3)--(2.7,0.35);

    \draw (4.5,3)--(5.8,4.3);
    \draw (3.8,4.3)--(4.5,5);
    \draw (3.5,5)--(3.8,5.3);
    \draw (3.3,5.3)--(3.5,5.5);
    \draw (3.2, 5.5)--(3.3, 5.6);
    \draw (3.15, 5.6)--(3.2, 5.65);
    

  \end{tikzpicture}}

\newcommand{\Emax}{
  \begin{tikzpicture}[baseline=1ex, scale=0.15]

    \draw (0,2)--(2,0);
    \draw (2,4)--(4,6);
    \draw (4,2)--(6,4);
  \end{tikzpicture}}

\newcommand{\acdbmaxapprox}{
  \begin{tikzpicture}[baseline=1ex, scale=0.5]
    \draw (0,1.7)--(1.3,3);
    \draw (1.3,1)--(2,1.7);
    \draw (2,0.7)--(2.3,1);
    \draw (2.3,0.5)--(2.5,0.7);
    \draw (2.5,0.4)--(2.6,0.5);
    \draw (2.65,0.3)--(2.7,0.35);
  \end{tikzpicture}}

\begin{abstract}
We consolidate what is currently known about packing densities of 4-point permutations and in the process improve the lower bounds for the packing densities of 1324 and 1342. We also provide rigorous upper bounds for the packing densities of 1324, 1342, and 2413. All our bounds are within $10^{-4}$ of the true packing densities. Together with the known bounds, this gives us a fairly complete picture of all 4-point packing densities. We also list a number of upper bounds for small permutations of length at least five. Our main tool for the upper bounds is the framework of flag algebras introduced by Razborov in 2007.
\end{abstract}

\section{Introduction}
\label{sec:intro}

In this paper, we study packing densities of small permutations. A \emph{permutation} is an ordered tuple utilizing all integers from $\{1,\ldots,n\}$. We say that $S = S[1]S[2]\cdots S[m] $ is a \emph{sub-permutation} of $P=P[1]P[2]\cdots P[n]$ if there exists an $m$-subset $\{k_1,\ldots,k_m\}$ of $\{1,\ldots,n\}$ such that for all $1 \leq i,j \leq m$, $S[i] < S[j]$ whenever $P[k_i] < P[k_j]$. We denote by $\#(S,P)$ the number of occurrences of $S$ as a sub-permutation of $P$. Let $\P_n$ be the set of all permutations of length $n$. If $\#(S,n) = \max_{P \in \P_n}\#(S,P)$, then the \emph{packing density} of $S$ is defined to be $p(S) = \lim_{n\to\infty} \#(S,n)/\binom{n}{m}.$

\begin{table}[ht]
\centering
\begin{tabular}{|c | c | c | c | c|}
\hline
$\mathbf{S}$ & \textbf{lower bound} & \textbf{ref LB} & \textbf{upper bound} & \textbf{ref UB}\\
\hline\hline
1234 & 1 & trivial & 1 & trivial\\
\hline
1432 & $\beta$ & \cite{price1997packing} & $\beta$ & \cite{price1997packing}\\
\hline
2143 & $3/8$ & trivial & 3/8 & \cite{price1997packing}\\
\hline
1243 & $3/8$ & trivial & 3/8 & \cite{albert2002packing}\\
\hline
1324 & $0.244^*$ & \cite{price1997packing} & $ -^* $ & \cite{price1997packing}\\
\hline
1342 & $\gamma^*$ & \cite{batkeyev} & $0.1988373^*$ & \cite{balogh2015minimum}\\
\hline
2413 & $\approx 0.104724$ & \cite{presutti2010packing} & $0.1047805^*$ & \cite{balogh2015minimum}\\
\hline
\end{tabular}
\caption{\small{Overview of packing densities for 4-point permutations. Values $\beta$ and $\gamma$ are known exactly: $\beta = 6\sqrt[3]{\sqrt{2}-1}-6/\sqrt[3]{\sqrt{2}-1}+4 \sim 0.423570$, $\gamma = (2\sqrt{3}-3)\beta \sim 0.19657960$. We know that the packing density of 1324 is close to 0.244 but there is no non-trivial upper bound. The items with an ($^*$) asterisk will be updated by the current work.}}
\label{tab:overview}
\end{table}

The study of permutation packing densities began with Wilf's 1992 SIAM address. Galvin (unpublished) soon rediscovered the averaging argument of~\cite{katona1964exists}, thus proving that $p(S)$ exists for all permutations $S$. The original argument was in the setting of graph theory. In 1993, Stromquist, and independently Galvin and Kleitman (both unpublished), found the packing density of 132. Up to symmetry, 132 is the only permutation of length 3 with a non-trivial packing density.

For 4-point permutations and their packing densities, it is useful to consult Table~\ref{tab:overview}. First results for 4-point permutations, including $1324$, $1432$, and $2143$, came as part of the investigation of various \emph{layered patterns} by~\cite{price1997packing}. Later,~\cite{albert2002packing} proved a tight upper bound for 1243, and upper bounds of $2/9$ for both 2413 and 1342. The current lower bound for the packing density of 2413 was given by~\cite{presutti2010packing}. The upper bounds of 0.1047805 and 0.1988373 for 2413 and 1342, respectively, are mentioned in passing in~\cite{balogh2015minimum}. They do not discuss them any further.

It is worthwhile to point out that~\cite{balogh2015minimum} used flag algebras to attack the packing density problem for monotone sequences of length 4. To the best of our knowledge, the only other application of flag algebras to permutation packing, although indirect, is by~\cite{falgas2013applications}. They obtained the inducibility (as packing density is refered to in graph theory) of a 2-star directed graph $\outstar$. Their result implies the known upper bound for the packing density of 132. Later,~\cite{huang2014stars} used an argument exploiting equivalence classes of vertices to extend the result to all directed $k$-stars. This argument was known in the permutations setting since~\cite{price1997packing} used it to establish the packing densities $p(1k\ldots 2)$ for all $k$. Similarly, although flag algebra software package Flagmatic, written by~\cite{flagmatic}, has been available since 2013, it has not previously been used to obtain an upper bound on the packing density of 1324.

Therefore, we decided to use the flag algebras method to collect, enhance, and improve results in permutation packing densities. In addition to the mathematical content, we make available a flag algebras package for permutations, \href{http://jsliacan.github.io/permpack/}{Permpack}, written as a \href{http://sagemath.org}{Sage} script. For more information about the software, follow~\cite{sagemath}. It does all our computations and can be used for further research. Permpack uses syntax similar to Flagmatic, but requires no installation. We hope this makes it more user friendly.

The rest of this paper is structured as follows. The aim of Section~\ref{sec:defs} is to introduce notation and concepts, including the part of flag algebras that we need. While~\cite{razborov2007original} presented flag algebras in the general setting of a universal model theory without constants and function symbols, we choose permutations to be the structures on which we base our exposition. Section~\ref{sec:mainresults} presents the main results of this chapter. We use flag algebras to provide upper bounds for the packing densities of 4-point permutations 1324, 1342, and 2413. We learnt belatedly about the existence of the latter two bounds from~\cite{balogh2015minimum}. Regarding lower bounds, we give a new lower bound construction for the packing density of 1342 that meets our upper bound to within $10^{-5}$. In case of 1324, we provide a lower bound that agrees with the upper bound on the first five decimal places. Section~\ref{sec:packingsmall} gives a list of selected upper bounds to illustrate the potential of the flag algebras method in the area of permutation packing. These results are not best possible, but can be obtained effortlessly by using our flag algebras package Permpack.
\section{Definitions and concepts}
\label{sec:defs}

A \emph{pattern} of length $k$, where $k \leq n$, is a $k$-tuple of distinct integers from $[n] :=\{1,\ldots,n\}$. Pattern of length $n$ is called a \emph{permutation}. We write tuples as strings: 1324 stands for $(1,3,2,4)$. Two patterns $P$ and $S$ of length $k$ are \emph{identical}, if $P[i] = S[i]$ for all $i \in [k]$. They are \emph{order-isomorphic} if for all pairs of indices $i,j$, it holds that $P[i] < P[j]$ implies $S[i] < S[j]$. For a set $I = \{i_1,\ldots,i_m\}$ of $m$ indices from $[n]$, the \emph{sub-pattern} $P[I]$ is the $m$-tuple $P[i_1]P[i_2]\cdots P[i_m]$. By overloading the notation slightly, we also use $P[I]$ to refer to the \emph{subpermutation} of length $m$ which is order-isomorphic to the sub-pattern $P[I]$. 

A \emph{decreasing (increasing) permutation} of length $k$ is the $k$-tuple $k\ldots321$ ($123\ldots k$). A permutation $P$ is \emph{layered}, if it is an increasing sequence of decreasing permutations. To be exact, a layered permutation $P$ is a concatenation of smaller permutations $P = P_1P_2\ldots P_\ell$ such that for all $1 \leq i \leq \ell$, $P_i$ is a decreasing sequence of consecutive integers satisfying the following: if $x \in P_i$ and $y \in P_j$ with $i<j$, then $x<y$. For instance, 321465987 can be partitioned as $321|4|65|987$, so it is layered. On the other hand, 2413 is not layered.

Given $S$ and $P$ of lengths $m$ and $n$, respectively, we let $\#(S, P)$ denote the number of times that $S$ occurs as a subpermutation of $P$. The \emph{density} of $S$ in $P$ is $$p(S,P) = \frac{\#(S,P)}{\binom{n}{m}}.$$ If $n < m$, we set $p(S,P) = 0$. Intuitively, $p(S,P)$ is the probability that a random $m$-set of positions from $[n]$ induces a pattern in $P$ that is order-isomorphic to $S$. For example, $p(12, 132) = 2/3$ as both 13 and 12 are order-isomorphic to 12 while 32 is not.

Let $\F$ be a set of \emph{forbidden} permutations. We say that permutation $P$ is \emph{$\F$-free} if $\#(F,P) = 0$ for all $F \in \F$. Such $P$ is also said to \emph{avoid} $\F$ or be \emph{admissible}. We denote by $\P_n$ the set of all \emph{admissible} permutations of length $n$. It will always be clear from context what $\F$ is. If $\F = \emptyset$, then the admissible set $\P_n$ is the set of all permutations of length $n$. Notice that if $P$ is admissible, then so are all its subpermutations. Most of the work in this paper concerns the case when $\F = \emptyset$. However, the setting remains the same whenever $\F$ is non-empty, and we provide a few examples to this effect. 

\begin{figure}[ht]
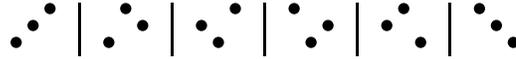

\centering
\scalebox{1.5}{
\begin{tabular}{c|c|c|c|c|c}
\abc & \acb & \bac & \cab &\bca & \cba
\end{tabular}
}
\caption{\small Permutations in $\P_3$, with $\F = \emptyset$. From left to right: 123, 132, 213, 312, 231, 321.}
\label{fig:exP3}
\end{figure}
Let $P \in \P_n$ and $S \in \P_m$ be admissible permutations, and assume $m \leq n$. The maximum value of $p(S,P)$ over $P \in \P_n$ is denoted by $p(S,n)$. Conversely, a permutation $P$ such that $p(S,P) = p(S,n)$ is an $S$-\emph{maximiser} of length $n$. It is well-known that for every $S$, the sequence $\left(p(S,n)\right)_{n\geq 0}$ converges to a value in $[0,1]$ because it is non-increasing and stays between 0 and 1. See~\cite{katona1964exists}. We are now ready to define the quantity that we study, packing density.

\begin{definition}
Let $S$ be a fixed permutation and $\P = \cup_{n\geq 1}\P_n$ the set of admissible permutations. The \emph{packing density} of $S$ is
$$p(S) = \lim_{n\to\infty}p(S,n).$$
\end{definition}
For example, the packing density of 12 in 123-free permutations is $1/2$. Notice that every maximiser of size $n$ has at most two layers. It is then easy to see that they should be of balanced sizes for the packing density to be maximised, i.e.~$\lfloor n/2 \rfloor$ and $\lceil n/2 \rceil$. Let $P_n$ be such balanced 2-layered maximizer of length $n$. Clearly, $p(12, P_n) \to 1/2$ as $n\to\infty$. 

We now formalise the ideas about asymptotic quantities and objects that the discussion is leading to. Let $(P_n)_n = P_1,P_2,P_3,\ldots$ be a sequence of permutations of increasing lengths. We say that $(P_n)_n$ is \emph{convergent} if for every permutation $S$, $(p(S,P_n))_{n=1}^\infty$ converges. A \emph{permuton} $\mu$ is a probability measure with uniform marginals on the Borel $\sigma$-algebra $\mathcal{B}([0,1]^2)$, i.e.~for every $a,b \in [0,1]$ with $a<b$, it holds that $\mu([a,b] \times [0,1]) = b-a = \mu([0,1] \times [a,b])$. See examples of permutons in Figure~\ref{fig:permutons}. 

\begin{figure}[ht]
  \centering
  \begin{subfigure}[b]{0.3\textwidth}
    \centering
      \begin{tikzpicture}[scale=1.8]
      \draw (0,0)--(1,0)--(1,1)--(0,1)--(0,0);
      \draw[thick] (0,0)--(1,1);
      \end{tikzpicture}
      \caption{\small Increasing}
      \label{fig:increasing}
    \end{subfigure}
    \begin{subfigure}[b]{0.3\textwidth}
      \centering
      \begin{tikzpicture}[scale=1.8]
        \fill[draw=black,color=lightgray] (0,0) rectangle (1,1);
        \draw (0,0) rectangle (1,1);
      \end{tikzpicture}
      \caption{\small Lebesgue}
      \label{fig:lebesgue}
    \end{subfigure}
    \begin{subfigure}[b]{0.3\textwidth}
      \centering
      \begin{tikzpicture}[scale=1.8]
        \draw (0,0)--(1,0)--(1,1)--(0,1)--(0,0);
        \draw[thick] (0,0)--(0.5,0.5);
        \draw[thick] (0.5,1)--(1,0.5);
      \end{tikzpicture}
      \caption{\small 1243-maximiser}
      \label{fig:max1243}
    \end{subfigure}
    \caption{\small Examples of permutons. In (a) we have the limit of $(1\ldots n)_{n=1}^\infty$, in (b) it is, with probability one, the limit of a sequence of randomly chosen permutations of each length, and in (c) we have the limit of $(1\ldots \lfloor n/2 \rfloor n \ldots \lceil n/2 \rceil)_{n=1}^\infty$.}
    \label{fig:permutons}
\end{figure}
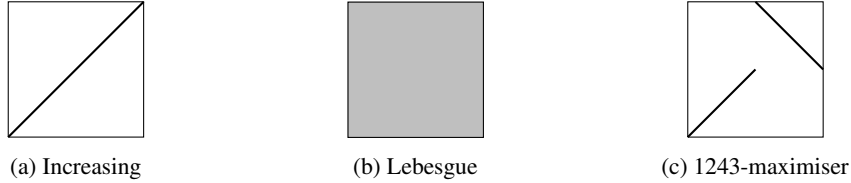

Let $\mu$ be a permuton and $S$ a permutation on $[m]$. One can sample $m$ points from $[0,1]^2$ according to $\mu$ and with probability one they will be in general position (no two aligned vertically or horizontally). We define $p(S,\mu)$ as the probability that a randomly sampled $m$ points from $[0,1]^2$ according to $\mu$ are order-isomorphic to $S$. It turns out that every convergent sequence of permutations has its permuton and vice versa. In particular,~\cite{hoppen2013permlimits} proved that for every $(P_n)_{n\geq 0}$ there exists a unique permuton $\mu$ such that for every $S$, $p(S,\mu) = \lim_{n\to\infty}p(S,P_n)$. In this sense, $\mu$ is the limit of the sequence $(P_n)_n$. In the other direction, they proved that if $\mu$ is a permuton and $P_n$ is a permutation of length $n$ sampled at random according to $\mu$ from $[0,1]^2$, then with probability one the sequence $(P_n)_n$ is convergent (with $\mu$ as its limit). The concept of permutation limits was known as ``packing measures'' since~\cite{presutti2010packing} used them for constructing the 2413 lower bound. In the current work, we use permutons mainly to describe extremal constructions that yield our lower bounds.

\subsection{Flag Algebras}
\label{sec:FA}

The term \emph{flag algebras} refers to a framework first introduced by~\cite{razborov2007original}. It proved to be a very useful tool for researchers in extremal graph theory, but found use in other fields as well. For an overview of results aided by flag algebras, see Razborov's own survey~\cite{razborov2013interim}. For more extensive expositions, see the PhD theses of~\cite{sperfeld2012thesis} and~\cite{volec2014thesis}. By now, there are also many papers with explanations and examples such as~\cite{babertalbot2011jump}, \cite{pikhurko2015neighbourhoods}, \cite{falgasvaughan2012densities}, \cite{falgas2013applications}. For a long list of important results across disciplines of discrete mathematics that were aided by flag algebras, see the abovementioned theses, especially Chapter 1 of~\cite{volec2014thesis}. The main flag algebra result in permutations is~\cite{balogh2015minimum}. In their work on quasirnadom permutations,~\cite{kral2013quasirandom} mention flag algebras as another way to think about the subject. It is important to note that the method of flag algebras has evolved from other combinatorial and analytic methods in combinatorics which had been used by researchers for a long time. The Cauchy-Schwarz type arguments can be found in e.g.~work by~\cite{bondy1997cs} as early as 1990s. The ideas pertinent to quasirandomness have been around since~\cite{chung1988quasirandom}. And while there are other analytic methods that were used successfully to attack extremal problems in combinatorics, the method of flag algebras is syntactical and lends itself to automation. The syntax-based nature of flag algebras is the main feature that distinguishes the theory of flag algebras from the theory of dense graph limits (see e.g.~\cite{lovasz2012networks}). The crux of the method is a systematic conversion of the combinatorial problem into a semidefinite programming problem. The latter can be solved (efficiently) by current SDP solvers. The numerical values returned by the SDP solvers then need to be transformed to exact values (rational or algebraic) to provide valid upper bounds on packing densities.

Before we delve into the method itself, let us consider an example from before. For the remainder of this section, assume that all objects (permutations, flags, types) are admissible unless stated otherwise. Now, assume that we are looking for $123$-free permutations $P$ that are as $12$-dense as possible. We get the following bound without much effort.

\begin{equation}
\begin{aligned}
p(12, P) &= \underbrace{p(12, 123)p(123,P)}_{ = 0} +\ p(12, 132)p(132,P) + p(12, 213)p(213,P)\\
  &+ p(12, 231)p(231,P) + p(12, 312)p(312,P) + p(12,321)p(321,P)\\
  &\leq \max \left\{\frac{2}{3}, \frac{2}{3}, \frac{1}{3}, \frac{1}{3}, 0\right\} = \frac{2}{3}
\label{ex:mantel}
\end{aligned}
\end{equation}
This is strictly better than a trivial bound of 1. However, observe that there is no $P$ of length greater than 4 such that $p(132,P) + p(213,P) = 1$. This follows from Erd\H{o}s-Szekeres theorem (adapted) which states that a permutation of length $(r-1)(s-1)+1$ contains either an increasing subpermutation of length $r$ or a decreasing subpermutation of length $s$. Hence, a permutation of length 5 contains 123 or 321 as a subpermutation. So there are always subsets of size 3 in $P$ which do not induce 132 or 213. Therefore, the bound of 2/3 is unachievable in practice. Knowing this, it would be useful to be able to control how copies of small permutations, such as 132 and 213, interact inside larger permutations. The method of flag algebras helps us systematically take into account the ways in which small patterns overlap inside larger structures. This takes the form of extra coefficients in front of $p(12,P)$ terms in~\eqref{ex:mantel}. If chosen well, they shift weight away from the large values like $p(12,231)$ and $p(12,312)$ and thereby reduce the maximum over all of them.

In general, the process is analogous to the example above. If $S$ is a small permutation whose packing density we seek to determine, we pick a reasonably small value $N \geq |S|$. The crude bound then looks as follows.
\begin{align}
  p(S,P) &= \sum_{P' \in \P_N}p(S,P')p(P',P)\notag\\
  &\leq \max_{P' \in \P_N} p(S,P') \label{eq:crudebd}
\end{align}

Before we describe how exactly we leverage overlaps between small patterns, we need to define flags, types, and operations on them.

\begin{definition}[Flag]
\label{def:permflag}
A permutation $\tau$-\emph{flag} $S^{\tau}$ is a permutation $S$ together with a distinguished subpermutation $\tau$, also called an intersection \emph{type}.
\end{definition}

\begin{figure}[ht]
\centering 
\begin{tabular}{c|c|c|c}
\tauab & \batau & \tauba & \abtau  \\
$S_1^1$ & $S^1_2$ & $S^1_3$ & $S^1_4$
\end{tabular}
\caption{\small If $\tau = 1$ (as permutation), then there are four distinct $\tau$-flags of length two. The empty circle marks $\tau$ in each flag.}
\label{fig:flags1}
\end{figure}

See Figure~\ref{fig:flags1} for a list of all 1-flags on two vertices. The set of all admissible $\tau$ flags of length $m$ is denoted by $\P_m^\tau$. If $\tau$ is the permutation of length 0 or 1, we write $\P_m^0$ and $\P_m^1$, respectively. Notice that $\P_m^0 = \P_m$. The \emph{support} $T$ of $\tau$ in $S^{\tau}$ is the set of indices of $S$ that span $\tau$ in $S^{\tau}$. We say that two permutation flags $S_1^{\tau_1}$ and $S_2^{\tau_2}$ are \emph{type}-isomorphic if $S_1 = S_2$ and if the supports of $\tau_1$ and $\tau_2$ are identical. For instance, in Figure~\ref{fig:flags1}, $S_1^1$ and $S_4^1$ are not type-isomorphic, because the support of $\tau$ in $S_1^1$ is $1$ and in $S^1_4$ it is $2$. For convenience, we set $t :=|\tau|$.

\begin{definition}
Let $S^{\tau}$ be a $\tau$-flag of length $m$, $P^{\tau}$ a $\tau$-flag of length $n \geq m$. We define $\#(S^{\tau}, P^{\tau})$ to be the number of $m$-sets $M \subseteq [n]$ such that $P[M]$ is type-isomorphic to $S^{\tau}$. Flag density is then defined as follows
$$ p(S^{\tau},P^{\tau}) = \frac{\#(S^{\tau}, P^{\tau})}{\binom{n-t}{m-t}}.$$
\end{definition}
In other words, $p(S^{\tau}, P^{\tau})$ is the probability that a uniformly at random chosen subpermutation of length $m$ from $P^{\tau}$, subject to it containing $\tau$, induces a a flag type-isomorphic to $S^{\tau}$. For instance, consider the following flag densities. The empty circle denotes $\tau = 1$. 

\begin{center}
\begin{tabular}{c c c}
$p(\tauab, \tauabc) = 1,$ & $p(\abtau, \tauabc) = 0,$ & $p(\abtau, \ataubc) = 1/2$
\end{tabular}
\end{center}

Finally, we define joint density of two flags, $p(S_1^{\tau}, S_2^{\tau}; P^{\tau})$, as the probability that choosing an $m_1$-set $M_1 \subseteq [n]$ such that $P[M_1]$ contains $\tau$ and choosing an $m_2$-set $M_2 \subseteq [n]$ such that $P[M_2]$ contains $\tau$ and $M_1 \cap M_2 = \tau$ induces $\tau$-flags $P[M_1]^{\tau}$ and $P[M_2]^{\tau}$ in $P^\tau$ which are type-isomorphic to $S_1^{\tau}$ and $S_2^{\tau}$, respectively. The following proposition turns out to be useful (Lemma 2.3 in~\cite{razborov2007original}). It says that choosing subflags with or without replacement makes no difference asymptotically.

\begin{proposition}
Let $S_1^{\tau}$ and $S_2^{\tau}$ be flags on $m_1$ and $m_2$ vertices. Let $n \geq m_1 + m_2 - t$ and $P^{\tau}$ be a flag on $n$ vertices. Then
$$p(S_1^{\tau},P^{\tau})p(S_2^{\tau},P^{\tau}) = p(S_1^{\tau},S_2^{\tau}; P^{\tau}) + o(1),$$
where $o(1) \to 0$ as $n \to \infty$.
\label{thm:littleo}
\end{proposition}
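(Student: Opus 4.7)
The plan is to interpret both sides of the claimed identity as probabilities in a joint sampling model and to bound their discrepancy by a routine overlap estimate. Let $T \subseteq [n]$ be the support of $\tau$ in $P^\tau$, so $|T| = t$. Sample $M_1$ and $M_2$ independently and uniformly at random among $m_i$-subsets of $[n]$ containing $T$, and write $A_i$ for the event that $P[M_i]$ is type-isomorphic to $S_i^\tau$. By definition $p(S_i^\tau, P^\tau) = \Pr[A_i]$, and independence of $M_1, M_2$ gives $p(S_1^\tau, P^\tau)\,p(S_2^\tau, P^\tau) = \Pr[A_1 \cap A_2]$. On the other hand, $p(S_1^\tau, S_2^\tau; P^\tau)$ is the same joint event but \emph{conditioned} on $B = \{M_1 \cap M_2 = T\}$.

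Next I would estimate $\Pr[B]$. Fixing $M_1$, the set $M_2 \setminus T$ must be an $(m_2 - t)$-subset of $[n] \setminus M_1$, so
\begin{equation*}
\Pr[B] \;=\; \frac{\binom{n - m_1}{m_2 - t}}{\binom{n - t}{m_2 - t}} \;=\; \prod_{i=0}^{m_2-t-1}\frac{n - m_1 - i}{n - t - i} \;=\; 1 - O(1/n),
\end{equation*}
where the implicit constant depends only on $m_1, m_2, t$. The hypothesis $n \ge m_1 + m_2 - t$ is precisely what ensures $\binom{n - m_1}{m_2 - t} > 0$, making $B$ have positive probability and the conditional density well-defined.

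Finally, conditioning yields
\begin{equation*}
\Pr[A_1 \cap A_2] \;=\; \Pr[A_1 \cap A_2 \mid B]\Pr[B] + \Pr[A_1 \cap A_2 \mid \overline{B}]\Pr[\overline{B}],
\end{equation*}
and since $\Pr[A_1 \cap A_2 \mid B] = p(S_1^\tau, S_2^\tau; P^\tau) \in [0,1]$ and $\Pr[\overline{B}] = O(1/n)$, both the multiplicative correction to the main term and the residual summand contribute $o(1)$, which gives the claim. The only real obstacle is matching the informal sampling description in the paper to the conditional-probability interpretation used here; once the definitions are aligned, the argument reduces to the elementary counting estimate above.
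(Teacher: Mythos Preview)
Your argument is correct and is essentially the standard proof of this fact. Note, however, that the paper does not actually supply its own proof of this proposition: it merely states the result and cites it as Lemma~2.3 of Razborov's original flag algebras paper. Your sampling-and-conditioning approach matches that standard argument: interpret the product of flag densities as $\Pr[A_1\cap A_2]$ under independent sampling of $M_1$ and $M_2$, interpret the joint density as the same quantity conditioned on $B=\{M_1\cap M_2=T\}$, and use the elementary estimate $\Pr[\overline{B}]=O(1/n)$ to control the difference. There is nothing to correct here.
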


Let $\ell = |\P_m^\tau|$ and fix an order on elements of $\P_m^\tau$. Let $S^\tau_i,S^\tau_j$ be $\tau$-flags from $\P_m^\tau$ and $P$ a $\tau$-flag from $\P_n^\tau$. Furthermore, let $\x$ be a vector with $i$-th entry $p(S_i^{\tau}, P^{\tau})$, and let $Q^\tau$ be a positive semi-definite matrix with dimensions $\ell \times \ell$. Then by Proposition~\ref{thm:littleo} and since $Q^\tau \succeq 0$, we have
$$ 0 \leq \x Q^\tau \x^T = \sum_{i,j \leq \ell} Q^\tau_{ij}p(S_i^{\tau},S_j^{\tau},P^{\tau}) + o(1).$$
Moreover, if we let $\sigma$ be a uniformly at random chosen type in $P$ of length $t$, the inequality above remains true. Moreover, an ``average'' $\sigma$ preserves the non-negativity as well. 
\begin{align}
0 \leq \mathbb{E}_{\sigma}\left(\x Q^\tau \x^T\right) &= \sum_{i,j \leq \ell}Q^\tau_{ij}\frac{1}{\binom{n}{t}}\sum_{\sigma \in \binom{[n]}{t}} p(S_i^{\tau}, S_j^{\tau}; P^{\sigma}) + o(1). \label{eq:fcoeffs}
\end{align}

Next, we write the above expression in terms of permutations on $N$ vertices. Having all information in terms of the same objects allows us to combine it together.  
\begin{align*}
\mathbb{E}_{\sigma}\left(\x Q^\tau \x^T\right) &= \sum_{i,j \leq \ell} Q^\tau_{ij} \frac{1}{\binom{n}{t}}\sum_{\sigma \in \binom{[n]}{t}} \sum_{P' \in \P_N}p(S_i^{\tau}, S_j^{\tau}; (P',\sigma)) p(P',P) + o(1)\\
&=  \sum_{P' \in \P_N}\underbrace{\left(\sum_{i,j \leq \ell} Q^\tau_{ij}\frac{1}{\binom{n}{t}}\sum_{\sigma \in \binom{[n]}{t}}p(S_i^{\tau}, S_j^{\tau}; (P',{\sigma})) \right)}_{\alpha(P', m,\tau)} p(P',P) + o(1)
\end{align*}

Notice that the last expression is of the form $\sum_{P' \in \P_N} \alpha(P',m,\tau)p(P',P)$. There is one of those for each type $\tau$ and value $m$. Every such choice will require another matrix $Q^\tau$. In practice, we first choose $N$, then take all possible pairs of $t$ and $m$ such that $N = 2m-t$. Thus once $N$ is fixed, the choice of $t$ determines the rest. Therefore, let $\alpha(P') = \sum_{\tau} \alpha(P',m,\tau)$ and recall that the expression that we are trying to minimise, subject to $Q^\tau \succeq 0$ for all $\tau$, comes from~\eqref{eq:crudebd}. By adding inequalities of the form of~\eqref{eq:fcoeffs} to~\eqref{eq:crudebd}, we obtain 
\begin{align}
p(S,P) &= \sum_{P'\in \P_N}p(S,P')p(P',P) \notag\\
  &\leq \sum_{P'\in \P_N}p(S,P')p(P',P) + \sum_{P'\in \P_N}\alpha(P')p(P',P)\quad \notag\\
  &\leq \max_{P'\in \P_N}\{p(S,P') + \alpha(P')\}. \label{eq:sdp}
\end{align}
This problem~\eqref{eq:sdp} has the form of a semidefinite programming problem subject to the condition that $Q^\tau \succeq 0$ for every type $\tau$. There exist numerical solvers, such as CSDP or SDPA, that we can use. However, the solution is in the form of numerical PSD matrices. These need to be converted to exact matrices without floating-point entries in a way that preserves their PSD property and still yields a bound that we are satisfied with. Since none of our bounds is tight, we will take a shortcut in rounding. Let $Q'$ be a numerical matrix returned by the solver. Since it is positive semi-definite, it admits a Cholesky decomposition into a lower and upper triangular matrices: $Q' = L'L'^T$. We compute this decomposition and then round the $L'$ matrices into $L$ matrices in such a way that they do not have negative entries on the diagonals. In certificates, we provide these $L$ matrices instead of $Q$ matrices. This way, one can readily check that $Q = LL^T \succeq 0$ by inspecting the diagonal entries of the $L$ matrices. 
\subsection{Example}
\label{sec:example}

The following example is a done-by-hand flag algebras method on a small problem of determining the packing density of 132. We have a lower bound of $2\sqrt{3}-3 \approx 0.464101615\ldots$ given by the standard construction. Assume we want to obtain an upper bound for the packing density of 132. Let $P$ be a (large) 132-maximiser of length $n$ and let $3 \leq \ell \leq n$. By~\eqref{eq:crudebd} we get
\begin{align*}
p(132) &\leq p(132, P)\\
  &= \sum_{P' \in \P_\ell}p(132,P')p(P',P)\\
       &\leq \max_{P' \in \P_\ell} p(132,P').
\end{align*}
We choose $\ell = 3$ and set $\lambda = 2\sqrt{3}-3$. Now consider 
\begin{align*}
\Delta &= \lambda p(123,P) + (\lambda-1)p(132,P) + \lambda p(213,P) + \frac{5\lambda-3}{6}p(231,P)\\
       &+ \frac{5\lambda-3}{6}p(312,P) + \lambda p(321,P).
\end{align*}
Adding the linear combination $\Delta$ of $P'$ densities to the previous crude upper bound improves it to $\lambda$.
\begin{align*}
p(132,P) &\leq \sum_{P' \in \P_\ell}p(132,P')p(P',P) + \Delta\\
&\leq \max_{P' \in \P_\ell}\{\lambda, \lambda,\lambda, \frac{5\lambda-3}{6}, \frac{5\lambda-3}{6},\lambda\}\\
&= \lambda
\end{align*}
The key property of $\Delta$ is that it is non-negative for all $P$, including all $P' \in \P_3$. Let $\sigma$ be a randomly chosen vertex out of the three available. The matrix $Q$ below is positive semi-definite and $\mathbf{x}_{P'}$ is a vector of flag densities for flags in Figure~\ref{fig:flags1}: $$\mathbf{x}_{P'} = \begin{pmatrix}p(\tauab,(P',\sigma))& p(\batau,(P',\sigma))& p(\tauba,(P',\sigma)) & p(\abtau,(P',\sigma))\end{pmatrix}.$$
\begin{align}
Q &= \begin{pmatrix}0 & 0 & 0 & 0 \\ 0 & \lambda & \lambda & 3(\lambda-1)/2\\0 & \lambda & \lambda & 3(\lambda-1)/2\\ 0 & 3(\lambda-1)/2 & 3(\lambda-1)/2 & 3\lambda  \end{pmatrix}
\label{eq:Q}
\end{align}
Averaging over $\sigma$ gives the expression~\eqref{eq:delta} that makes the non-negativity of $\Delta$ apparent.
\begin{align}
\Delta &= \mathbb{E}_\sigma\left(\sum_{P' \in \P_3}\mathbf{x}_{P'}Q\mathbf{x}_{P'}^T\right) \geq 0 \label{eq:delta}
\end{align}
Therefore, we proved that $p(132) \leq 2\sqrt{3}-3$. 

\subsection{Implementation}
Flagmatic 2.0 was written by Emil R. Vaughan and is currently the only general implementation of Razborov's flag algebra framework which is freely available to use and modify. See~\cite{flagmatic} for more information. The project is hosted at \url{http://github.com/jsliacan/flagmatic}. Unfortunatelly, Flagmatic does not support permutations. For this reason, we wrote Permpack, a lightweight implementation of flag algebras on top of SageMath's Sage 7.4 (see~\cite{sagemath}). It does not have all the functionality of Flagmatic but it is sufficient for basic tasks. For more information, code, and installation instructions, see \url{https://github.com/jsliacan/permpack}. 

Let us consider an example of how Permpack can be used on the above example of 132-packing. It will be clear from Permpack's output where the $Q$ matrix above comes from. In Permpack, one needs to specify the complexity in terms of $N$, the length of the admissible permutations which all computations are expressed in terms of. The \texttt{density\_pattern} argument specifies the permutation whose packing density we want to determine. Once permutations, types, flags, and flag products are computed, we can delegate the rest of the tasks to the solver of our choice (currently supported solvers are \texttt{csdp} and \texttt{sdpa\_dd}). The answer is a numerical upper bound on $p(132)$. It can be rounded automatically to a rational bound by the \texttt{exactify()} method of the \texttt{PermProblem} class. The certificate contains admissible permutations, flags, types, matrices $Q$ (as $L$ matrices in the Cholesky decomposition of $Q$) and the actual bound as a rational number (fraction). These are suficient to verify the bound. Below is the script used to obtain the numerical $Q'$ matrix for the packing density of 132 with Permpack.

\lstset{language=Python, basicstyle=\ttfamily\scriptsize, keywordstyle=\color{keywords}, commentstyle=\color{comments}, stringstyle=\color{myred}, showstringspaces=false, identifierstyle=\color{green}, procnamekeys={def,class}, frame=single, caption={Packing 132 with Permpack.}}
\begin{lstlisting}
p = PermProblem(3, density_pattern="132")
p.solve_sdp()
\end{lstlisting}
\lstset{language=Python, basicstyle=\ttfamily\scriptsize, keywordstyle=\color{black}, commentstyle=\color{black}, stringstyle=\color{black}, showstringspaces=false, identifierstyle=\color{black}, procnamekeys={def,class}, frame=single, caption={Output.}}
\begin{lstlisting}
...
Success: SDP solved
Primal objective value: -4.6410162e-01 
Dual objective value: -4.6410162e-01 
Relative primal infeasibility: 5.90e-14 
Relative dual infeasibility: 1.67e-10 
Real Relative Gap: 3.68e-10 
XZ Relative Gap: 6.14e-10 
\end{lstlisting}

It is not difficult to guess the entries of $Q$ from the numerical matrix below, which is part of the output of the SDP solver. The resulting exact matrix $Q$ is shown in~\eqref{eq:Q}.
\lstset{language=Python, basicstyle=\ttfamily\scriptsize, keywordstyle=\color{black}, commentstyle=\color{black}, stringstyle=\color{black}, showstringspaces=false, identifierstyle=\color{black}, procnamekeys={def,class}, frame=single, caption={Floating point $Q'$ matrix.}}
\begin{figure}[ht]
\begin{lstlisting}
  [ 4.55854035127455e-10  6.806084489120e-12  6.8060845047452e-12 -1.032045390820e-10]
  [ 6.80608448912001e-12  0.4641016162301893  0.464101613919741   -0.8038475767936814]
  [ 6.80608450474521e-12  0.464101613919741   0.4641016162301782  -0.8038475767936717]
  [-1.03204539082084e-10 -0.803847576793681  -0.8038475767936717   1.3923048450288649]
\end{lstlisting}
\end{figure}

\section{Results}
\label{sec:mainresults}

The following theorem will be needed later. There exist further variations of it, e.g.~Proposition~2.1 and Theorem~2.2 in~\cite{albert2002packing}. However, we only need the original version.

\begin{theorem}[\cite{stromquist1993unpublished}]
\label{thm:layered}
Let $S$ be a layered permutation. Then for every $n$, extremal value of $p(S,n)$ is achieved by a layered permutation. Moreover, if $S$ has no layer of size 1, every maximiser of $p(S,n)$ is layered.
\end{theorem}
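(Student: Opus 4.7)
The plan is an exchange argument. Start with a maximiser $P$ of $\#(S,\cdot)$ on length-$n$ permutations. If $P$ is already layered, we are done, so assume otherwise. Since layered permutations are exactly those avoiding $231$ and $312$, a non-layered $P$ contains one of these patterns. The strategy is to find a local modification of $P$ (typically an adjacent swap) that chips away at this defect without decreasing $\#(S,P)$, and then iterate until we reach a layered permutation.

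The principal technical tool is a swap lemma. Let $P'$ be obtained from $P$ by swapping the adjacent entries $P[i]$ and $P[i+1]$. Copies of $S$ using neither of $\{i,i+1\}$ are literally unchanged in $P'$, and copies using exactly one admit a position-relocating bijection (send the copy using position $i$ with value $P[i]$ to the copy using position $i+1$ with the same value, and vice versa), so they contribute equally to $\#(S,P)$ and $\#(S,P')$. The only change comes from copies using both positions: for each such copy realised on a $k$-subset $K\cup\{i,i+1\}$, its induced pattern in $P'$ differs from the one in $P$ only by transposing the two adjacent entries at positions $j(K), j(K)+1$ of the pattern, where $j(K):=|\{k\in K:k<i\}|+1$. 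Thus $\#(S,P')-\#(S,P)$ is a signed count of patterns matching $S$, with the sign at each $K$ determined by whether $S$ has an ascent or a descent at positions $j(K), j(K)+1$.

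Next I would combine this with the layered structure of $S$. Because $S$ is layered, its ascents sit only at layer boundaries while descents fill the interiors of layers, which rigidly constrains the balance of signed contributions in the swap lemma. The plan is to locate an adjacent pair of $P$ witnessing non-layeredness --- for instance, an adjacent ascent $(i,i+1)$ with some value to the left of $i$ exceeding $P[i+1]$ or some value to the right of $i+1$ below $P[i]$, whose existence is equivalent to $P$ being non-layered --- and argue that at least one of the two local rearrangements at this defect yields a non-negative change in $\#(S,P)$. Together with a monotone potential (such as the inversion count relative to a fixed layered reference, or the number of $231$ plus $312$ occurrences), iteration terminates in a layered $P^\ast$ with $\#(S,P^\ast)\ge\#(S,P)$, establishing the first assertion.

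For the ``moreover'' clause, the no-singleton-layer hypothesis on $S$ forces every ascent position of $S$ to be flanked on both sides by descent positions, which rules out the degenerate pattern alignments that could leave the signed count in the swap lemma at zero. Hence every applicable swap is \emph{strictly} improving for a non-layered $P$, contradicting maximality unless $P$ is itself layered. The main obstacle I anticipate is the combinatorial verification that the chosen swap is always non-decreasing (respectively strictly improving); this reduces to a careful case analysis on how $j(K)$ distributes relative to $S$'s layer boundaries, and it is precisely here that the layered hypothesis on $S$ is essentially used.
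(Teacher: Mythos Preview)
The paper does not prove this theorem; it is stated with a citation to \cite{stromquist1993unpublished} (a published version appears in \cite{albert2002packing}) and then used as a black box. There is therefore no ``paper's own proof'' to compare against.

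On the merits of your plan: the overall architecture --- characterise layered as $\{231,312\}$-avoiding, use a swap lemma for adjacent transpositions, and iterate a local exchange driven by a potential --- is exactly the spirit of the known proofs. Your swap lemma is correct as stated. But the step you flag as ``the main obstacle'' is not a detail to be filled in later; it \emph{is} the proof. Swapping an arbitrary adjacent ascent can strictly decrease $\#(S,P)$ even for layered $S$ (take $S=132$, $P=132$: the only ascent is at positions $(1,2)$, and the swap gives $312$ with zero copies), so the restriction to ``bad'' ascents is essential, and the claim that such a swap is non-decreasing needs a genuine counting argument comparing copies lost (those where the two positions land on an ascent of $S$, i.e.\ a layer boundary) against copies gained (those landing on a descent of $S$, i.e.\ inside a layer). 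That comparison is where the layered hypothesis on $S$ does all its work, and it is a page or two of careful case analysis in the published arguments --- often phrased more cleanly via swapping consecutive \emph{values} rather than adjacent positions. As written, your proposal is a correct outline of a viable strategy but stops short of the substantive content; the ``moreover'' clause likewise needs the strict-inequality version of that same analysis, which you have asserted but not argued.
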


\noindent The scripts used to obtain results in this section can be found at\\

\url{https://github.com/jsliacan/permpack/tree/master/scripts}.\\

\noindent The certificates in support of the upper bounds in this section can be found at the address below. With each result, we provide the name of the certificate file that witnesses it, e.g. \texttt{cert1324.js} witnesses the upper bound for $p(1324)$.\\

\url{https://github.com/jsliacan/permpack/tree/master/certificates}.

\subsection{Packing 1324}
Layered permutations have been studied in depth by~\cite{price1997packing}. He came up with an approximation algorithm that, at $m$-th iteration assumes that the extremal construction has $m$ layers (see Theorem~\ref{thm:layered}) and optimises over their sizes. The algorithm then proceeds to increase $m$ and halts when increasing $m$ does not improve the estimate. In that case, an optimal construction has been found (up to numerical noise from the optimization, if any). In reality, the procedure is stopped manually when approximation is fine enough or the problem becomes too large. Therefore, for every $m$, the value that Price's algorithm gives is a lower-bound for the packing density in question. \\

It is known that the extremal construction for the packing density of 1324 is layered with infinite number of layers. See, for instance,~\cite{albert2002packing} and~\cite{price1997packing}. The main theorem of this section is the following. 
\begin{theorem}
\label{thm:pack1324}
\begin{align*}
0.244054321 < p(1324) < 0.244054549
\end{align*}
\end{theorem}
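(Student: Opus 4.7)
\textbf{The plan} is to prove the two bounds separately. The lower bound comes from an explicit layered construction optimised by Price's algorithm; the upper bound comes from running the flag-algebra SDP~\eqref{eq:sdp} at a sufficiently large complexity and then rounding the numerical certificate to exact arithmetic.

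For the lower bound, note first that $1324$ is itself layered, with layers $[1]$, $[3,2]$, $[4]$. By Theorem~\ref{thm:layered} there is then, for every $n$, a layered $p(1324,n)$-maximiser, so it suffices to optimise over layered permutons. Parameterise such a candidate by the vector $(a_1, \dots, a_m)$ of layer masses with $\sum_i a_i = 1$. In the corresponding permuton $\mu_{\mathbf a}$, any occurrence of $1324$ is obtained by sampling one point from some layer $i$, two points from a strictly later layer $j$, and one point from a still later layer $k$; the four sampled points then automatically induce the pattern $1324$, and so
\[
p(1324, \mu_{\mathbf a}) \;=\; 12 \sum_{i<j<k} a_i\, a_j^{\,2}\, a_k.
\]
Following Price's algorithm, I would maximise this polynomial over the simplex for increasing $m$, stopping once the value stabilises. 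Since each iterate is a valid lower bound for $p(1324)$, pushing $m$ large enough produces the claimed $0.244054321$.

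For the upper bound, I would apply the flag-algebra method of Section~\ref{sec:FA} to $S=1324$ at a complexity $N$ large enough to separate the answer from the target $0.2441$ (in practice $N=8$). This requires enumerating the admissible permutations in $\mathcal{P}_N$, choosing all valid pairs $(t,m)$ with $2m-t=N$, and computing the products $p(S_i^\tau, S_j^\tau; (P', \sigma))$ that feed the linear form~\eqref{eq:fcoeffs}. Feeding the resulting SDP into CSDP (or SDPA) through \texttt{Permpack} returns numerical $Q^\tau \succeq 0$ whose coefficients $\alpha(P')$ drive the maximum in~\eqref{eq:sdp} down to roughly $0.2440545$. To make this rigorous I would Cholesky-factor each $Q^\tau$ as $L L^T$, round the entries of $L$ to rationals while keeping diagonal entries strictly positive (so that $Q = L L^T \succeq 0$ is manifest), and re-evaluate the maximum in~\eqref{eq:sdp} in exact arithmetic. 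The rational bound so obtained is $0.244054549$, as recorded in the certificate file \texttt{cert1324.js}.

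\textbf{The hard part} is computational rather than conceptual. At $N=8$ the SDP has many thousands of admissible permutations and a large number of non-trivial type-blocks, so one really has to rely on Permpack plus a serious SDP solver; smaller values of $N$ simply will not reach $0.2441$ on the upper side. The more delicate step is the rounding: a naive rationalisation of $L$ either destroys PSD-ness of $Q$ or inflates the bound well beyond the roughly $2\cdot 10^{-7}$ gap advertised in the theorem, so the rounding must be performed as described at the end of Section~\ref{sec:FA}. The construction side, by contrast, is entirely smooth: the polynomial in the $a_i$ is low-degree and its optimum converges quickly as $m$ grows.
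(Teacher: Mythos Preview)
Your plan is sound and would establish both bounds, but on each side the paper takes a somewhat different route.

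For the lower bound, rather than running Price's iteration over all $m$-layer permutons and letting $m$ grow, the paper writes down a specific ansatz $\Gamma = A' \oplus B' \oplus C \oplus B \oplus A$ in which the two outer blocks $A,A'$ are \emph{fixed} to be $132$- and $213$-maximisers (themselves infinitely layered), leaving only the sizes $a=|A|$, $b=|B|$, $c=|C|$ to optimise. This collapses the search to a two-parameter problem (with $2a+2b+c=1$) and an explicit expression $p(1324,\Gamma)=24(N_1+2N_2+2N_3+2N_4)$ in closed form; the value $0.244054321$ then drops out of a single calculus optimisation rather than a sequence of simplex problems of increasing dimension. Your polynomial $12\sum_{i<j<k}a_i a_j^2 a_k$ is correct and Price's algorithm would eventually reach the same threshold, but the paper's shortcut explains \emph{why} the tails look like $132$-maximisers and avoids the open-ended ``push $m$ large enough'' step.

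For the upper bound, the paper's primary argument does \emph{not} run the permutation SDP directly. Instead it invokes Theorem~\ref{thm:layered} to restrict attention to layered maximisers, then proves a small lemma encoding layered permutations as $\{\text{directed }3\text{-cycle},\, \text{2-chain},\, \text{oriented cherry}\}$-free digraphs, so that $p(1324)$ equals a Tur\'an density in that hereditary digraph class. Flagmatic is then run on this much sparser class. The paper does remark that Permpack on unrestricted permutations gives a comparable bound ($<0.244054540$), so your direct approach is viable; the digraph reduction is what buys the slightly sharper $0.244054549$ without pushing $N$ as far as your plan anticipates.
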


\begin{proof}
  Consider the construction $\Gamma$ from Figure~\ref{fig:gamma_constr}, where $\Gamma$ is a permuton. Let $C$ denote the middle layer of $\Gamma$ (the largest layer), $B$ denote the layer above (and $B'$ the layer below) $C$, and $A$ denotes the group of the remaining layers above $B$ (and $A'$ denotes the groupp of layers below $B'$). So $\Gamma = A' \oplus B' \oplus C \oplus B \oplus A$, where $A \oplus B$ means that the layer $A$ is entirely below and to the left of the layer $B$. Let $c = |C|$, $b = |B| = |B'|$, and $a =  |A| = |A'|$. We assume that $A$ (and $A'$) is isomorphic to a maximiser for the packing of 132-pattern (213-pattern). The aim is to optimise over $a$ and $b$. Ideally, the tails of $\Gamma$ would also be optimised over, but that is infeasible. So we assume the tails are 132 (213) maximisers. It turns out that the first two steps give a good lower bound. We now compute the density of 1324 patterns in $\Gamma$. There are four distinct (i.e.~up to symmetry) positions that a copy of 1324 can assume in $\Gamma$. Let $xyzw$ be the four points in $\Gamma$ that form a copy of $1324$ in that order.
\begin{enumerate}
\item $y,z \in C$, $x \in A' \cup B'$, $w \in A \cup B$, there are $N_1$ such copies
\item $y,z \in B$, $x \in A' \cup B' \cup C$, $w \in A$, there are $N_2$ such copies
\item $y,z,w \in A$, $x \in A' \cup B' \cup C \cup B$, there are $N_3$ such copies
\item $x,y,z,w \in A$, , there are $N_4$ such copies
\end{enumerate}
Let us now determine quantities $N_1,\ldots, N_4$. 
\begin{enumerate}
\item $N_1 = c^2/2 + (a+b)^2$
\item $N_2 = b^2/2 + a(a+b+c)$
\item $N_3 = (2\sqrt{3}-3)\frac{a^3}{6}\cdot (a+2b+c)$
\item $N_4 = \sum_{k=0}^\infty \frac{\sqrt{3}\cdot(2\sqrt{3}-3)}{6 \cdot (\sqrt{3}+1)^{4k+4}}\cdot a^4$.
\end{enumerate}
Finally, we get the density of 1324 pattern in $\Gamma$. Let $b = 1-c-2a$. Then
\begin{align*}
p(1324, \Gamma) &= \max_{\substack{0< c\leq 1/2\\ 0 < a < \leq 1/4}}24\cdot(N_1 + 2N_2 + 2N_3 + 2N_4)\\
& > 0.244054321.
\end{align*}
This proves the lower bound in Theorem~\ref{thm:pack1324}, because $0.244054321 < p(1324,\Gamma) \leq p(1324)$.
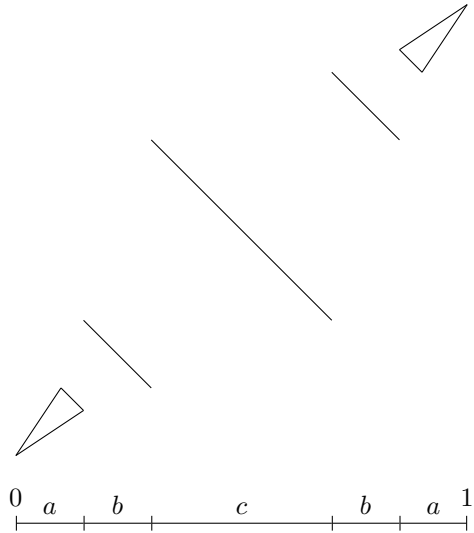
\begin{figure}[ht]
\centering
\begin{tikzpicture}[scale=0.3]
\draw (7,8)--(8,7) (10,10) edge (7,8) (10,10) edge (8,7); 
\draw (4,7)--(7,4);
\draw (-4,4)--(4,-4);
\draw (-4,-7)--(-7,-4);
\draw (-7,-8)--(-8,-7) (-10,-10) edge (-7,-8) (-10,-10) edge (-8,-7); 
\draw[|-] (-10,-13)--(-7,-13) node[above=3pt, at start]{$0$} node[midway, above]{$a$};
\draw[|-] (-7,-13)--(-4,-13) node[midway, above]{$b$};
\draw[|-] (-4,-13)--(4,-13) node[midway, above]{$c$};
\draw[|-] (4,-13)--(7,-13) node[midway, above]{$b$};
\draw[|-|] (7,-13)--(10,-13) node[midway, above]{$a$} node[above=3pt]{$1$};
\end{tikzpicture}
\caption{\small{Permuton $\Gamma$ provides a lower bound for $p(1324)$. The triangles at the ends represent permutons that are maximisers for the packing of 132 and 213 (L to R).}}
\label{fig:gamma_constr}
\end{figure}

We use Flagmatic to prove the upper bound. Since 1324 is layered, there is a 1324-maximiser that is layered as well. Therefore, we can limit the search space to the layered permutations. Since Flagmatic does not work with permutations, we transformed the problem to an equivalent problem in directed graphs -- which Flagmatic can handle. 
\begin{lemma}
\label{lem:permstographs}
Let $\F = \{\dicycle, \twochain, \orcocherry\}$ be the set of forbidden digraphs.  The packing density of 1324 equals the Tur\'an $\digraphacbd$-density of $\F$. In other words, $$p(1324)= p(\digraphacbd, \F).$$
\end{lemma}
\begin{proof}[\ref{lem:permstographs}]
There is a unique way to encode a layered permutation $P$ as a directed graph $D$. If and only if two points $x,y \in P$ form a $12$ pattern, then $xy$ is an arc $x \to y$ in $D$. Forbidding $\dicycle$, $\twochain$, and $\orcocherry$ in $D$ forces it to be a union of independent sets with arcs between them so that if $x,y$ are vertices in one independent set and $u,v$ are vertices in another independent set of $D$, then if $xu$ is an arc in $D$, so are $xv$, $yu$, and $yv$. In other words, all arcs between two independent sets are present, and all go in the same direction. Moreover, the direction is transitive (\dicycle is forbidden). Together with the first rule about the direction of arcs between independent sets, this fully characterizes the digraph $D$ from the permutation $P$. Clearly, the process is reversible.
\end{proof}

Given Lemma~\ref{lem:permstographs}, we use flag algebra method on directed graphs to compute an upper bound for the packing density of \digraphacbd (an equivalent of 1324 in digraphs) over $\{\dicycle, \twochain, \orcocherry\}$-free digraphs. The resulting bound is the one in Theorem~\ref{thm:pack1324}. The certificate is called \texttt{cert1324flagmatic.js}. Note that this is a Flagmatic certificate and can be verified using the \texttt{inspect\_certificate.py} script that comes with Flagmatic. The script is \texttt{pack1324flagmatic.sage}.


\end{proof}

A similar bound can be achieved by Permpack. In particular, we can show that $p(1324)< 0.244054540$. Certificate: \texttt{cert1324permpack.js}. Script: \texttt{pack1324permpack.sage}. Despite Permpack being able to prove a good bound, we used Flagmatic in the proof above to emphasise that this result had been available before Permpack was written.

\subsection{Packing 1342}
\label{sec:pack1342}

The previous lower bound for the packing density of 1342 was approximatelly 0.1965796. The result of~\cite{batkeyev} can be found in~\cite{albert2002packing}.

\begin{figure}[ht]
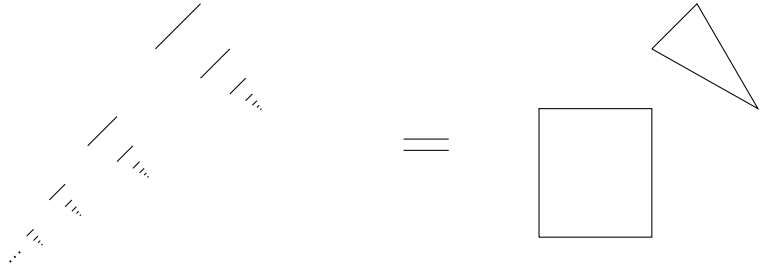

\centering \acdbmax
\caption{\small On the left is Batkeyev's construction for the lower bound on $p(1342)$ as product of packing densities of 132 and 1432. On the right is the schematic drawing of it. The triangle stands for a 231-maximiser and the square stands for the part inside which the entire construction is iterated.}
\label{fig:batkeyev}
\end{figure}

Let $\lambda = 2\sqrt{3}-3$ be the packing density of 231 and $\kappa$ the ratio between the top layer  and the rest of the 1432-maximiser, see~\cite{price1997packing} ($\kappa$ is the root of $3x^4-4x+1$). Batkeyev suggested to replace each layer in the maximiser of 1432 by a 231-maximiser while preserving the size ratio $\kappa$. The density of 1342 in Batkeyev's construction (see Figure~\ref{fig:batkeyev}) is 
\begin{align*}
p(1342, B) &=(8 \sqrt{3}-12)\cdot \sum _{n=0}^{\infty } (1-\kappa)^3 \kappa ^{4n+1} \\
           &= p(132)p(1432)\\
           &= 2 \left(2 \sqrt{3}-3\right) \left(3 \sqrt[3]{\sqrt{2}-1}-\frac{3}{\sqrt[3]{\sqrt{2}-1}}+2\right)\\
           &\approx 0.1965796\ldots
\end{align*}
This lower bound was widely regarded as possibly optimal. Our contribution to this problem is finding a vastly better lower bound construction. However, if we restrict the space of admissible permutations to those that avoid 2431, then Batkeyev's construction is likely optimal. We are able to prove the following theorem on $N=6$ admissible graphs to keep the SDP small (if $N=7$ was chosen, the bound would likely be slightly better).

\begin{theorem}
$$p(1342,\{2431\}) < 0.19658177.$$
\end{theorem}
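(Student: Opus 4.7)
The plan is to run the flag algebra semidefinite programming framework of Section~\ref{sec:FA} inside the admissible class $\mathcal{P} = \bigcup_n \mathcal{P}_n$ of $\{2431\}$-free permutations, with target $S = 1342$ and complexity $N = 6$. First I would enumerate $\mathcal{P}_6$ (the 2431-free permutations of length 6) together with all admissible types $\tau$ and the corresponding $\tau$-flag sets $\mathcal{P}_m^\tau$ satisfying $N = 2m - |\tau|$; this is entirely mechanical and handled by Permpack. For each such $\tau$ I would then precompute the joint-density coefficients $p(S_i^\tau, S_j^\tau; (P', \sigma))$ that make up the linear map $Q^\tau \mapsto \alpha(P', m, \tau)$ appearing in~\eqref{eq:fcoeffs}, and assemble the single semidefinite program~\eqref{eq:sdp}: minimise $\max_{P' \in \mathcal{P}_6}\{p(1342, P') + \alpha(P')\}$ subject to $Q^\tau \succeq 0$ for every admissible type $\tau$.

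Next I would pass this SDP to a numerical solver (\texttt{csdp} or, if conditioning is poor, \texttt{sdpa\_dd}), obtaining floating-point PSD matrices $Q'^\tau$ whose objective value should sit just below the target $0.19658177$. The heuristic reason to expect this is precisely the remark in Section~\ref{sec:pack1342}: Batkeyev's construction is likely optimal in the 2431-free setting, so a moderate complexity $N=6$ should already come within a few units of the fifth decimal place of $p(132)\,p(1432) \approx 0.19657960$, leaving ample slack to the stated bound $0.19658177$.

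The delicate step is rounding. Following the shortcut described at the end of Section~\ref{sec:FA}, I would Cholesky-decompose each $Q'^\tau = L'^\tau (L'^\tau)^T$, round the entries of $L'^\tau$ to rationals $L^\tau$ keeping every diagonal entry strictly positive, and then certify the exact $Q^\tau := L^\tau (L^\tau)^T \succeq 0$. After this rounding, I would recompute $\alpha(P')$ exactly and check that $p(1342, P') + \alpha(P') < 0.19658177$ for every $P' \in \mathcal{P}_6$. The main obstacle is keeping the rounding error small enough that no $P'$ violates this strict inequality; since the target is not claimed to be tight we expect comfortable slack, but type matrices can be ill-conditioned and may need entry-wise tolerance tuning (or a slight perturbation toward the identity before rounding) to survive.

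Finally, I would package the exact $L^\tau$ matrices, the list of flags, types, and admissible length-$6$ permutations into a certificate file (e.g.\ \texttt{cert1342\_2431free.js}), so that verification amounts to (i) inspecting each $L^\tau$ for positive diagonal entries, and (ii) recomputing the rational value $\max_{P' \in \mathcal{P}_6}\{p(1342, P') + \alpha(P')\}$ and comparing it with $0.19658177$. This establishes the stated upper bound via~\eqref{eq:sdp}.
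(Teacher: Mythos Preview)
Your proposal is correct and follows precisely the approach of the paper: run the flag-algebra SDP of Section~\ref{sec:FA} for $S=1342$ over the $\{2431\}$-free class at complexity $N=6$, round via Cholesky, and record the certificate. The paper's own proof is simply a pointer to the resulting certificate \texttt{cert1342\_forb2431.js} and script \texttt{pack1342\_forb2431.sage}, so you have spelled out exactly what that computation does.
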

\begin{proof}
Certificate: \texttt{cert1342\_forb2431.js}. Script: \texttt{pack1342\_forb2431.sage}.
\end{proof}

The following result addresses the actual packing density of 1342 without any forbidden patterns.
\begin{theorem}
\label{thm:newbounds1342}
$$0.198836597 < p(1342) < 0.198837287.$$
\end{theorem}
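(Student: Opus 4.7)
The plan is to split the proof into two parts: a construction yielding the lower bound, and a flag-algebras computation yielding the upper bound.

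For the lower bound, I would first observe why Batkeyev's construction cannot be optimal. Since the version of the problem in which $2431$ is forbidden has upper bound below $0.19658177$ by the preceding theorem, and the target $p(1342)$ exceeds this by almost $0.003$, every near-optimal construction must contain $2431$ patterns in a nontrivial way. I would therefore search for a new permuton $\Lambda$ that extends Batkeyev's idea but interleaves copies of $231$-maximisers (and possibly smaller $1342$- or $1432$-type blocks) so as to produce $2431$ subpatterns while still creating many $1342$ patterns. A natural candidate is a multi-parameter family built recursively: partition $[0,1]^2$ into several horizontal/vertical strips, place $231$-maximisers in some strips, and place a scaled copy of $\Lambda$ (or of a related limit object) in another strip. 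Then $p(1342,\Lambda)$ can be written as a rational function of the strip widths and of $p(132) = 2\sqrt{3}-3$, summed as a geometric series from the recursion. Numerical optimisation over the free widths should produce a value exceeding $0.198836597$; exhibiting a single feasible (algebraic or rational) parameter point and evaluating the closed-form expression there rigorously establishes the lower bound.

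For the upper bound, I would follow the flag-algebras recipe of Section~\ref{sec:FA} directly on permutations via Permpack. Because $1342$ is not layered, the digraph reduction of Lemma~\ref{lem:permstographs} is unavailable, so the SDP must be built on $\P_N$ itself. I would fix the largest admissibility level $N$ (probably $N=7$) for which the SDP remains tractable, enumerate $\P_N$ and all types $\tau$ with $N = 2m - t$, assemble the matrices of flag products and feed the problem to CSDP or SDPA. From the numerical output I would extract the Cholesky factors $L^{\tau}$ of each $Q^{\tau}$, round each $L^{\tau}$ to an exact matrix with nonnegative diagonal (preserving positive semidefiniteness of $Q^{\tau} = L^{\tau}(L^{\tau})^{T}$), and plug back into the bound~\eqref{eq:sdp} to obtain an exact rational upper bound below $0.198837287$, recorded as a certificate \texttt{cert1342.js}.

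The main obstacle is the lower bound: finding the correct refined construction $\Lambda$. Once its recursive structure is fixed, computing $p(1342,\Lambda)$ in closed form is routine bookkeeping -- one classifies each $1342$ occurrence by how its four points split among the strips of the top-level decomposition, writes each case as a product of strip widths times a known density in a $231$- or $132$-maximiser, and handles the recursive strip by a geometric sum. But deciding which building blocks to use and how to nest them is genuinely creative, since the jump from $0.1965796$ to $0.198836$ is too large to come from parameter retuning of Batkeyev's shape alone. The upper-bound rounding is also delicate because the gap in the theorem is only $\sim 7 \times 10^{-7}$, leaving very little slack; this forces running the SDP solver at high precision (e.g.\ \texttt{sdpa\_dd}) and rounding the Cholesky factors with care.
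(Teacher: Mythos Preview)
Your proposal is correct and follows essentially the same approach as the paper: a new recursive permuton for the lower bound together with a flag-algebra SDP at $N=7$ (rounded via Cholesky factors) for the upper bound. The paper's actual construction $\Pi$ is slightly different from what you guess---it has seven parts, only one of which is a $231$-maximiser and one of which is the recursive copy, with the remaining five being plain increasing segments arranged in a non-monotone vertical order---but your plan of writing $p(1342,\Lambda)$ as a closed form in the strip widths and optimising numerically is exactly what is done.
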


\begin{proof}
The new lower bound is given by the construction $\Pi$ in Figure~\ref{fig:pack1342}. The weights we used for the parts are given in \texttt{cert1342lb.txt}, located with other certificates.
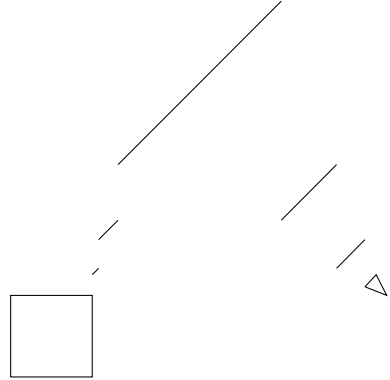
\begin{figure}[ht]
\centering
\begin{tikzpicture}[scale=5]
  
  \draw (0,0)--(0.217,0)--(0.217,0.217)--(0,0.217)--(0,0);
  \draw (0.217,0.272)--(0.234,0.289);
  \draw (0.234,0.3654)--(0.2856,0.417);
  \draw (0.2856, 0.565)--(0.7196,1);
  \draw (0.7196, 0.417)--(0.8666,0.565);
  \draw (0.8666,0.289)--(0.942,0.3654);
  \draw (0.942,0.24)--(0.972,0.272)--(1,0.217)--(0.942,0.24);
\end{tikzpicture}
\caption{\small New lower bound construction $\Pi$ for the packing density of 1342. The part sizes, left to right, are approximatelly 0.2174, 0.0170, 0.0516, 0.4341, 0.1480, 0.0764, 0.0554. The square part represents the part inside which the whole construction is iterated. The triangle part is the extremal construction for 231-packing.}
\label{fig:pack1342}
\end{figure}

\begin{equation}
\begin{aligned}
a_1 &= 0.2174127723536347308692444843\\
a_2 &= 0.0170598057899242722740620549\\
a_3 &= 0.0516101402487892270230230972\\
a_4 &= 0.4340722809873864994312953007\\
a_5 &= 0.1479895625950390496250611829\\
a_6 &= 0.0764457255805656971383351365\\
a_7 &= 0.0554097124446605236389787433
\label{weights}
\end{aligned}
\end{equation}
Label the 7 parts of $\Pi$ from left to right as $a_1,\ldots,a_7$. We assign the weights to them roughly as in~\eqref{weights}. Then a straightforward calculation of the 1342 density in $\Pi$ implies the desired lower bound. The sage script that does this is called \texttt{lb1342.sage}, located with other scripts. The upper bound certificate is called \texttt{cert1342.js}. Script is \texttt{pack1342.sage}.
\end{proof}

 The upper bound obtained without flag algebras stands at $2/9$, see~\cite{albert2002packing}. The upper bound above was obtained via the flag algebras method and confirms the claimed bound from~\cite{balogh2015minimum}. We used $N = 7$ for our computations. While it is possible that $N=8$ would yield a slightly better bound, the computations would be much more expensive. Without a candidate for an exact lower bound, we were satisfied with the bound we obtained with $N=7$.

\subsection{Packing 2413}
\label{sec:pack2413}
The case packing 2413 patterns is fairly complicated as can be seen from the lower bound construction by~\cite{presutti2010packing}. The previous upper bound obtained without flag algebras was $2/9$ by~\cite{albert2002packing}. The bound below was obtained via flag algebras and is in the same range as the bound in~\cite{balogh2015minimum}. 
\begin{theorem}
\label{thm:high2413}
\begin{align*}
p(2413) &< 0.10478046354353523761779.
\end{align*}
\end{theorem}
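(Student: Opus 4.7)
The plan is to apply the flag algebras framework from Section~\ref{sec:FA} directly to the problem of packing 2413. Since 2413 is not layered, Theorem~\ref{thm:layered} does not apply and the reduction to a directed graph Tur\'an problem used in the proof of Theorem~\ref{thm:pack1324} is unavailable; I would therefore work entirely in the permutation setting using Permpack, taking the density pattern to be 2413 and fixing the complexity parameter $N = 7$, as was done for 1342 in Theorem~\ref{thm:newbounds1342}.

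The steps are as follows. First, enumerate $\P_7$ with $\F = \emptyset$ and, for each admissible type $\tau$ of odd length $t \in \{1,3,5\}$ with $m = (N+t)/2$, enumerate the $\tau$-flags in $\P_m^\tau$. Second, compute the averaged joint-density coefficients $\alpha(P',m,\tau)$ that appear in the expansion of $\mathbb{E}_\sigma(\mathbf{x} Q^\tau \mathbf{x}^T)$ leading up to~\eqref{eq:sdp}. Third, feed the resulting semidefinite program into an SDP solver (CSDP or SDPA) to obtain numerical PSD matrices $Q^\tau$; the program minimises $\max_{P' \in \P_7}\{p(2413,P') + \alpha(P')\}$ subject to $Q^\tau \succeq 0$ for every type $\tau$. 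Fourth, compute a Cholesky decomposition $Q^\tau = L^\tau (L^\tau)^T$ for each type, round the entries of $L^\tau$ to rationals while keeping the diagonals strictly positive, and verify that the exact PSD matrices so obtained still certify an upper bound no larger than $0.10478046354353523761779$.

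The main obstacle is the rounding step: the target is stated to twenty digits, so the slack between the numerical SDP optimum and the rational certificate has to be controlled tightly. If rounding inflates the bound above the displayed value the certificate is useless; if it is too conservative, one loses the tight agreement with the value mentioned in~\cite{balogh2015minimum}. I would allow a small slack in the claimed bound, raise the solver precision (e.g.~switching to \texttt{sdpa\_dd}) when necessary, and iterate Cholesky-and-round until the rational bound lies safely under the target. At $N=7$ the number of admissible permutations and flag products is large, so memory and solver precision, rather than any new conceptual ingredient, dominate the computation; the resulting artefacts would be stored as \texttt{cert2413.js} and \texttt{pack2413.sage}, mirroring the format used for the 1342 bound.
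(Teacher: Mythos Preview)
Your proposal is correct and matches the paper's own proof: it applies the flag algebras framework in the permutation setting via Permpack with $N=7$, solves the resulting SDP, rounds the Cholesky factors to rationals, and records the certificate and script as \texttt{cert2413.js} and \texttt{pack2413.sage}. The paper's proof consists of precisely this computation, stated as a reference to those two files.
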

\begin{proof}
Certificate: \texttt{cert2413.js}. Script: \texttt{pack2413.sage}.
\end{proof}

We used admissible permutations of length $N=7$. Again, larger $N$ could yield a slightly better upper bound, but without an exact lower bound, this effort would not be justified.

\section{Packing other small permutations}
\label{sec:packingsmall}

The flag algebras method will yield upper bounds for many problems. In some cases these bounds are particularly interesting because they are close to their corresponding lower bounds. In this section we list a selection of upper and lower bounds that are potentially sharp since their values appear to be close to each other. 

In the list below we choose to represent the permutations by their drawings in the grid. This is more transparent as the permutations became larger. The extremal constructions (permutons) on the left-hand side of the Table~\ref{tab:otherperms} are represented by their drawings as well. The lower bounds are given on the left-hand side of the table and upper bounds on the right-hand side of the table. This is a sample of the results obtained with Permpack via flag algebras.
\begin{table}[ht]
\centering
\begin{tabular}{l | l | l}
  Permutation & Lower bounds & Upper bounds\\
  \hline
  23154 & $p\left(\bcaoba\ ,\ \Amax\right) = 5!\frac{(2/5)^2}{2!}\frac{(3/5)^3}{3!}(2\sqrt{3}-3)$ & $p\left(\bcaoba\right) \leq 0.16039\ldots$\\
  14523 & $p\left(\aobamba\ ,\ \Bmax\right) \sim 0.153649\ldots$  & $p\left(\aobamba\right) \leq 0.153649\ldots$\\
  21354 & $p\left(\baoaoba\ ,\ \Cmax\right) \sim 0.16515\ldots$  & $p\left(\baoaoba\right) \leq 0.16515\ldots$\\
  231654 & $p\left(\bcaocba\ ,\ \AAmax\right) = 6!\frac{(1/2)^6}{3!^2}(2\sqrt{3}-3)$ & $p\left(\bcaocba\right) \leq 0.145031\ldots$\\
  231564 & $p\left(\bcaobca\ ,\ \Dmax\right)=  (2\sqrt{3}-3)^2\frac{6!}{48^2}$ & $p\left(\bcaobca\right) \leq 0.0673094$\\
  231645 & $p\left(\bcaocab\ ,\ \Dmaxr\right)= (2\sqrt{3}-3)^2\frac{6!}{48^2}$ & $p\left(\bcaocab\right) \leq 0.0673094$\\
  215634 & $p\left(\baoabmab\ ,\ \Emax\right) = \frac{6!}{9^32^3}$ & $p\left(\baoabmab\right) \leq 0.123456\ldots$
\end{tabular}
\caption{\small Exact values are known for all densities on the left-hand side. They are described in the text as they are not easy to write down.}
\label{tab:otherperms}
\end{table}

We now give the descriptions of the lower bound constructions. For $23154 = \bcaoba$, the construction is a sum of two parts in ratio $2:3$ top to bottom. The bottom part is a 231-maximiser while the top part is a simple decreasing segment. Certificate: \texttt{cert23154.js}. Script: \texttt{pack23154.sage}.

The construction for $14523 = \aobamba$ is designed as follows. Let $\alpha$ be the maximiser of $5(1-x)^4/(1-x^5)$ such that $\alpha \in [0,1]$. The topmost sum-indecomposable part of the $\aobamba$-maximiser has length $\alpha$ and the remainder of the maximiser has length $(1-\alpha)$. The construction is iterated inside the part of length $(1-\alpha)$. The part of length $\alpha$ is a skew-sum of two balanced increasing segments. The exact value of the density on the left-hand side of Table~\ref{tab:otherperms} is too complicated to fit in. Certificate: \texttt{cert14523.js}. Script: \texttt{pack14523.sage}.

The construction for $21354 = \baoaoba$ is a 4-layered permuton with layers of lengths $\beta, 1/2-\beta, 1/2-\beta, \beta$, top to bottom. Here, $\beta$ is the real root of $40x^3 - 32x^2 + 9x - 1 = 0$. Again, we only write the approximate value on the left-hand side in Table~\ref{tab:otherperms} for space reasons. Certificate: \texttt{cert21354.js}. Script: \texttt{pack21354.sage}.

The construction for $231654 = \bcaocba$ is identical in structure to the construction for $\bcaoba$, except the ratios of the two parts in the sum are $1:1$. Certificate: \texttt{cert231654.js}. Script: \texttt{pack231654.sage}.

The construction for $231564 = \bcaobca$ is the sum of two 231-maximisers of equal size. In case of $231645 = \bcaocab$, the top 231-maximiser is flipped accordingly. Certificates: \texttt{cert231564.js} and \texttt{cert231645.js}. Scripts: \texttt{pack231564.js} and \texttt{pack231645.js}.

The construction for $215634 = \baoabmab$ has three segments of equal length arranged as portrayed in Table~\ref{tab:otherperms}. Certificate: \texttt{cert215634.js}. Script: \texttt{pack215634.sage}.

\section{Conclusion}

While we now know the packing densities of all 4-point permutations with accuracy of 0.01\%, finding candidates for optimal constructions for the cases of 1324 and 1342 remains a challenge. In the case of 1324, a new idea for the part ratios will be needed to come up with a possible extremal construction. As for the 1342 pattern, the extremal construction might use a different layer formation than our $\Pi$. Even if $\Pi$ has the right structure, the part ratios remain to be determined precisely. The latest status of 4-point packing densities is depicted in Table~\ref{tab:updated}.

\begin{table}[ht]
\centering
\begin{tabular}{|c | c | c | c | c|}
\hline
$\mathbf{S}$ & \textbf{lower bound} & \textbf{ref LB} & \textbf{upper bound} & \textbf{ref UB}\\
\hline\hline
1234 & 1 & trivial & 1 & trivial\\
\hline
1432 & $\beta$ & \cite{price1997packing} & $\beta$ & \cite{price1997packing}\\
\hline
2143 & $3/8$ & trivial & 3/8 & \cite{price1997packing}\\
\hline
1243 & $3/8$ & trivial & 3/8 & \cite{albert2002packing}\\
\hline
1324 & $0.244054321^*$ & -- & $0.244054549^*$ & -- \\
\hline
1342 & $0.198836597^*$ & --  & $0.198837286342^*$ & -- \\
\hline
2413 & $\approx 0.104724$ & \cite{presutti2010packing} & $0.104780463544^*$ & -- \\
\hline
\end{tabular}
\caption{\small{Overview of packing densities for 4-point permutations given the information in this paper. The values with asterisk have been updated.}}
\label{tab:updated}
\end{table}

After 4-point permutations, there are many packing densities of small permutations of length $5, 6,\ldots$. The values of lower bounds and upper bounds in Table~\ref{tab:otherperms} should be made to match. In some cases this will be easier than in others. In particular, the packing density of 21354 has been mentioned in both~\cite{albert2002packing} and~\cite{hasto2002packing}.

There are analogous questions to be asked about packing densities when certain patterns are forbidden. As an example, we mentioned $p(1342,\{2431\})$ in relation to $p(1342)$. 

Next, an interesting line of enquiry was made precise as Conjecture 9 in~\cite{albert2002packing}. For a packing of pattern $S$, is there an extremal construction with infinite number of layers? Are all extremal constructions of that form? More precisely, let an $S$-\emph{maximiser} be an $n$-permutation $P$ such that $p(S,n) = p(S,P)$. If $L_n$ is the number of layers in a layered maximiser of length $n$, what can we say about $L_n$ as $n\to\infty$? For example, we know that the number of layers in every 1324-maximiser is unbounded as $n \to \infty$. We also know that a 2143-maximiser has only two layers, regardless of $n$.

\acknowledgements
We would like to thank Robert Brignall for heaps of useful discussions.

\bibliographystyle{abbrvnat}
\bibliography{packing4point}

\end{document}